\newtheorem{theorem}{Theorem}[section]
\newtheorem{lemma}{Lemma}[section]
\newtheorem{proposition}{Proposition}[section]
\journal{arXiv}
\begin{document}

\begin{frontmatter}

\title{An adaptive low-rank splitting approach for the extended Fisher--Kolmogorov equation}

\author[address1]{Yong-Liang Zhao}
\ead{ylzhaofde@sina.com}

\author[address2]{Xian-Ming Gu\corref{correspondingauthor}}
\cortext[correspondingauthor]{Corresponding author}
\ead{guxianming@live.cn, guxm@swufe.edu.cn}



\address[address1] {School of Mathematical Sciences,
Sichuan Normal University,
Chengdu, Sichuan 610068, P.R. China}
\address[address2] {School of Mathematics, 
	Southwestern University of Finance and Economics, Chengdu, Sichuan 611130, P.R. China}

\begin{abstract}
The extended Fisher--Kolmogorov (EFK) equation has been used
to describe some phenomena in physical, material and biology systems.
In this paper, we propose a full-rank splitting scheme and a rank-adaptive splitting approach
for this equation.
We first use a finite difference method to approximate the space derivatives.
Then, the resulting semi-discrete system is split into two stiff linear parts
and a nonstiff nonlinear part.
This leads to our full-rank splitting scheme.
The convergence and the maximum principle of the proposed scheme are proved rigorously.
Based on the frame of the full-rank splitting scheme,
a rank-adaptive splitting approach for obtaining a low-rank solution of the EFK equation.
Numerical examples show that our methods are robust and accurate.
They can also preserve energy dissipation and the discrete maximum principle.
\end{abstract}

\begin{keyword}
Extended Fisher--Kolmogorov equation\sep Splitting scheme\sep Rank adaptivity\sep
Dynamical low-rank approximation
\end{keyword}

\end{frontmatter}


\section{Introduction}
\label{sec1}

The extended Fisher--Kolmogorov (EFK) equation
\begin{equation*}
\partial_t u = - \kappa \Delta^2 u + \Delta u - F'(u)
\end{equation*}
proposed by Coullet et al.~\cite{coullet1987nature}, Dee and Van Saarloos \cite{van1987dynamical,van1988front,dee1988bistable}
is important in describing diverse physical, chemical and biological phenomena.
Such as the diffusion of domain walls in liquids crystals \cite{guozhen1982experiments},
traveling waves in reaction-diffusion systems \cite{aronson1978multidimensional,ahlers1983vortex},
the mesoscopic model of a phase transition
in a binary system near Lipschitz points \cite{hornreich1975critical} and
the spatio-temporal turbulence in a bistable system \cite{coullet1987nature}.
Here $\Delta$ is the Laplacian, $F(u) = \frac{1}{4} \left( u^2 - 1 \right)^2$,
the stabilizing parameter $\kappa$ is a positive constant.
Set $\kappa = 0$ in the above equation, the Fisher--Kolmogorov equation \cite{Kolmogorov37} is obtained.
Many applications of the Fisher--Kolmogorov equation can be found in
brain tumor dynamics \cite{belmonte2014effective} and
reaction-diffusion in chemistry \cite{adomian1995fisher}.

In this paper, we concentrate on the following EFK model equipped with the periodic boundary condition \cite{sun2022convex}:
\begin{equation}
\begin{cases}
\partial_t u(x,y,t) = - \kappa \Delta^2 u(x,y,t) + \Delta u(x,y,t) + f(u(x,y,t)),
& \textrm{in} \quad \Omega \times (0,T], \\
u(x,y,0) = u_0(x,y), &  \textrm{in} \quad \Omega,
\end{cases}
\label{eq1.1}
\end{equation}
where $\Omega = [x_L,x_R] \times [y_L,y_R] \subset \mathbb{R}^2$ and $f(u) = -F'(u) = u - u^3$.
The energy of Eq.~\eqref{eq1.1} is defined as
\begin{equation*}
	E(u) = \int_{\Omega} \left( \frac{\kappa}{2} \left| \Delta u \right|^2 +
	\frac{1}{2} \left| \nabla u \right|^2 + F(u) \right) d \textbf{x}.
\end{equation*}

Peletier and Troy \cite{peletier1997spatial} studied stationary periodic solutions of Eq.~\eqref{eq1.1}.
Obviously, the EFK equation is a fourth-order nonlinear partial differential equation
in which the analytical solution is often unavailable.
Thus, some numerical methods are developed for solving Eq. \eqref{eq1.1},
see \cite{sun2022convex,danumjaya2005orthogonal,danumjaya2006numerical,kadri2011second,
	khiari2011finite,gudi2013fully,he2016norm,dehghan2021local,abbaszadeh2020error,ismail2022three,li2022new}.
Danumjaya and Pani \cite{danumjaya2005orthogonal} proposed an orthogonal cubic spline collocation method
for the one-dimensional (1D) EFK equation.
Then, for the same model, they proposed three numerical schemes by
using the $C^1$-conforming finite element method in space \cite{danumjaya2006numerical}.
Kadri and Omrani \cite{kadri2011second} presented a nonlinear Crank-Nicolson finite difference scheme
for the 1D EFK equation. The uniqueness and convergence of this scheme are also studied.
Later, Khiari and Omrani \cite{khiari2011finite} extended the scheme for the two-dimensional (2D) EFK equation.
He \cite{he2016norm} designed a three-level linearly implicit finite difference scheme
for the EFK equations in both 1D and 2D.
Then, in order to obtain higher-order numerical results for the EFK equations,
Ismail et al.~\cite{ismail2022three} proposed a fourth-order in space
three level linearized difference scheme.
Sun et al.~\cite{sun2022convex} presented a convex splitting variable-step BDF2 scheme for Eq.~\eqref{eq1.1}.
The uniqueness and energy stable of the scheme are analyzed.

For unbearably large systems, many researchers prefer to use low-rank approximation techniques
since they can reduce the computational complexity. However, the above mentioned literatures are not consider low-rank approaches for the (high-dimensional) EFK equation.
In this paper, based on the idea of the dynamical low-rank approximation (DLRA) \cite{koch2007dynamical},
we first attempt to find low-rank solutions of \eqref{eq1.1}.
Inspired by the Dirac--Frenkel time-dependent variational principle,
Koch and Lubich \cite{koch2007dynamical} first proposed the DLRA for time-dependent data matrices
and matrix differential equations.
Recently, it has been widely used,
see \cite{nonnenmacher2008dynamical,ostermann2019convergence,Einkemmer2019Vlasov,
	zhao2021low,grasedyck2013literature,einkemmer2022robust}
and references therein.
Lubich and Oseledets \cite{lubich2014projector} proposed
a projector-splitting integrator for solving the DLRA resulting matrix differential equations numerically.
Kieri and Vandereycken \cite{kieri2019projection} considered the DLRA
on the manifold of fixed-rank matrices and tensor trains.
Ceruti and Lubich \cite{Ceruti2022unconventional} proposed
a new low-rank integrator for the DLRA.
Compared with the projector-splitting integrator \cite{lubich2014projector},
their integrator avoids the backward time integration substep and offers more parallelism.
The mentioned methods are fixed-rank integrators.
It is not suitable for the case that the optimal rank required
for a given approximation accuracy is unknown beforehand and may vary greatly over time.
To handle this problem, Ceruti et al.~\cite{Ceruti2022adaptiveDLR} designed
a rank-adaptive integrator for the DLRA.
For solving evolution problems by using the DLRA in a parallel-in-time pattern,
Carrel et al.~\cite{Carrel2022DLRparareal} presented a low-rank Parareal algorithm.

The main contributions of this work can be summarized as follows:

(i) We split Eq.~\eqref{eq1.1} into three subproblems, then a full-rank splitting (FRS) scheme is established.
The convergence of this scheme is analyzed.

(ii) Based on this full-rank splitting scheme,
we propose a rank-adaptive low-rank approach for Eq.~\eqref{eq1.1}.
To the best of our knowledge, no literature considers such low-rank approaches for \eqref{eq1.1}.

The rest of this paper is organized as follows.
Section \ref{sec2} derives our full-rank splitting scheme
and proves the convergence and the maximum principle preserving of the scheme.
In Section \ref{sec3}, for finding a low-rank approximation of Eq.~\eqref{eq1.1},
a rank-adaptive low-rank splitting approach is proposed.
Several numerical experiments are reported in Section \ref{sec4}.
Finally, some conclusions are drawn in Section \ref{sec5}.
\section{A full-rank splitting scheme and its convergence}
\label{sec2}

In this section, we derive the FRS scheme for Eq.~\eqref{eq1.1} and prove its convergence.
\subsection{A full-rank splitting scheme}
\label{sec2.1}

For three given positive integers $M$, $N_x$ and $N_y$, let $\tau = \frac{T}{M}$,
$h_x = \frac{x_R - x_L}{N_x}$ and $h_y = \frac{y_R - y_L}{N_y}$
be time step size, spatial grid sizes in $x$ and $y$ directions, respectively.
Then, the domain $\Omega \times [0,T]$ can be
covered by the mesh $\omega_{h \tau} = \bar{\omega}_{h} \times \bar{\omega}_{\tau}$,
where $\bar{\omega}_{\tau} = \left\{ t_k = k \tau, ~k = 0, 1, \ldots, M \right\}$ and
\begin{equation*}
\bar{\omega}_{h} = \left\{ x_h = (x_i,y_j) \mid x_i = x_L + i h_x,
y_j = y_L + j h_y, ~i = 0, 1, \ldots, N_x; j = 0, 1, \ldots, N_y \right\}.
\end{equation*}

The second-order and fourth-order derivative terms in \eqref{eq1.1} at $(x,y) = (x_i,y_j)$
can be approximated as
\begin{equation*}
\begin{split}
\Delta u(x_i, y_j, t) & =
\frac{u(x_{i - 1}, y_j, t) - 2 u(x_i, y_j, t) + u(x_{i + 1}, y_j, t)}{h_x^2} + \\
&\qquad \frac{u(x_i, y_{j - 1}, t) - 2 u(x_i, y_j, t) + u(x_i, y_{j + 1}, t)}{h_y^2}
+ \mathcal{O}(h_x^2 + h_y^2) \\
& \triangleq \delta_s u(x_i, y_j, t) + \mathcal{O}(h_x^2 + h_y^2),
\end{split}
\end{equation*}
\begin{equation*}
		\Delta^2 u(x_i, y_j, t)  = \delta_s \left( \delta_s u(x_i, y_j, t) \right)
		+ \mathcal{O}(h_x^2 + h_y^2)
		 \triangleq \delta_s^2 u(x_i, y_j, t) + \mathcal{O}(h_x^2 + h_y^2),
\end{equation*}
respectively.

Let $u_{ij}(t)$ be the numerical approximation of $u(x_i, y_j, t)$.
Then, we get the semi-discrete scheme of \eqref{eq1.1}:
\begin{equation*}
\frac {d u_{ij}(t)}{d t} = - \kappa \delta_s^2 u_{ij}(t) + \delta_s u_{ij}(t) + f(u_{ij}(t)).
\end{equation*}

Two identity matrices with orders $N_x$ and $N_y$ are denoted as $I_x$ and $I_y$, respectively.
Let
\begin{align*}
& \bm{u}(t) = \left[u_{11}(t), \ldots, u_{N_x,1}(t), u_{12}(t), \ldots, u_{N_x,2}(t), \ldots,
u_{1,N_y}(t), \ldots, u_{N_x,N_y}(t) \right]^{\top}, \\
& f(\bm{u}(t)) = \left[f(u_{11}(t)), \ldots, f(u_{N_x,1}(t)), f(u_{12}(t)), \ldots, f(u_{N_x,2}(t)),
		\ldots, f(u_{1,N_y}(t)), \ldots, f(u_{N_x,N_y}(t)) \right]^{\top},
\end{align*}
$A = I_y \otimes A_x + A_y \otimes I_x$ with
\begin{equation*}
A_s = \frac{1}{h_s^{2}}
\begin{bmatrix}
-2 & 1 &  &  & 1 \\
1 & -2 & \ddots &  & \\
 & \ddots & \ddots & \ddots & \\
 &  & \ddots & -2 & 1 \\
1 &  &  & 1 & -2 \\
\end{bmatrix} \in \mathbb{R}^{N_s \times N_s}, \quad s = x, y.
\end{equation*}
Then, we obtain the following system of ordinary differential equations (ODEs):
\begin{equation}\label{eq2.1}
\frac{d \bm{u}(t)}{d t} = \tilde{A} \bm{u}(t) + A \bm{u}(t) + f(\bm{u}(t)),
\end{equation}
where $\tilde{A} = -\kappa A^2$.

We split Eq.~\eqref{eq2.1} into three parts.
This yields the following three subproblems:
\begin{equation}\label{eq2.2}
\frac{d \bm{v}(t)}{d t} = \tilde{A} \bm{v}(t), \qquad \bm{v}(0) = \bm{v}_0,
\end{equation}
\begin{equation}\label{eq2.3}
	\frac{d \bm{\varphi}(t)}{d t} = A \bm{\varphi}(t), \qquad \bm{\varphi}(0) = \bm{\varphi}_0
\end{equation}
and
\begin{equation}\label{eq2.4}
\frac{d \bm{\omega}(t)}{d t} = f(\bm{\omega}(t)), \qquad \bm{\omega}(0) = \bm{\omega}_0.
\end{equation}

Denote by $\Phi_{\tau}^{\tilde{A}}(\bm{v}_0)$, $\Phi_{\tau}^{A}(\bm{\varphi}_0)$ and
$\Phi_{\tau}^{f}(\bm{\omega}_0)$
the solutions of the subproblems \eqref{eq2.2}-\eqref{eq2.4} at $t_1$
with initial values $\bm{v}_0$, $\bm{\varphi}_0$ and $\bm{\omega}_0$, respectively.
Then, the FRS scheme with time step size $\tau$ is given by
\begin{equation}\label{eq2.5}
\mathcal{L}_{\tau} = \Phi_{\tau}^{A} \circ \Phi_{\tau}^{\tilde{A}} \circ \Phi_{\tau}^{f}.
\end{equation}
Set $\bm{\omega}_0 = \bm{u}(0)$,
we obtain the numerical solution of Eq.~\eqref{eq1.1} at $t_1$ as follows
\begin{equation*}
\bm{u}^1 = \mathcal{L}_{\tau} (\bm{u}(0)) =
\Phi_{\tau}^{A} \circ \Phi_{\tau}^{\tilde{A}} \circ \Phi_{\tau}^{f} (\bm{u}(0)).
\end{equation*}

Subsequently, the numerical solution of Eq.~\eqref{eq1.1} at $t_k$ is
$\bm{u}^k = \mathcal{L}_{\tau}^k (\bm{u}(0))$.
Moreover, the solution of \eqref{eq2.2} and \eqref{eq2.3} at $t_1$ can be written as
\begin{equation*}
\bm{v}(t_1) = e^{\tau \tilde{A}} \bm{v}_0 \quad \mathrm{and}
\quad \bm{\varphi}(t_1) = e^{\tau A} \bm{\varphi}_0,
\end{equation*}
respectively.
\subsection{Convergence and discrete maximum principle}
\label{sec2.2}

Let $\mathcal{V}_h = \left\{ v_h = v(x_h) \mid x_h \in \bar{\omega}_{h}~
\mathrm{and}~\bm{v}~\mathrm{is}~L\mathrm{\mbox{-}periodic~ in~ each~ direction} \right\}$.
A discrete inner product and the corresponding norm are defined as
\begin{equation*}
(\bm{u},\bm{v}) = h_x h_y \sum\limits_{x_h \in \bar{\omega}_{h}} u_h v_h, \quad
\| \bm{u} \| = \sqrt{(\bm{u}, \bm{u})}, \quad \forall \bm{u},\bm{v} \in \mathcal{V}_h.
\end{equation*}

For proving the convergence, the following lemma is needed.
\begin{lemma}(\cite{abbaszadeh2020error})\label{assumption1}
Suppose $u \in C^{\infty} (\Omega)$.
Then, the nonlinear term $f(u) = u - u^3$ in Eq.~\eqref{eq1.1} satisfies
\begin{equation*}
\left| f(u) - f(v) \right| < L \left| u - v \right|,
\end{equation*}
where $L$ is a Lipschitz constant.
\end{lemma}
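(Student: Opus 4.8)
The plan is to reduce the difference $f(u)-f(v)$ to the factor $u-v$ multiplied by a bounded quantity, exploiting the algebraic factorization of the cubic. First I would write
\[
f(u)-f(v) = (u-v) - (u^3-v^3) = (u-v)\bigl[1 - (u^2+uv+v^2)\bigr],
\]
using the identity $u^3-v^3 = (u-v)(u^2+uv+v^2)$. Taking absolute values gives
\[
\left| f(u)-f(v) \right| = \left| u-v \right| \cdot \bigl| 1 - (u^2+uv+v^2) \bigr|,
\]
so everything comes down to bounding the second factor by a constant $L$ that is independent of the two points.

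Next I would invoke the regularity hypothesis $u \in C^{\infty}(\Omega)$, together with the compactness of $\bar{\Omega}$ and the periodicity built into the problem, to conclude that $u$ (and likewise any admissible argument $v$) is uniformly bounded, say $\left| u \right|, \left| v \right| \le K$. Then $\left| u^2+uv+v^2 \right| \le 3K^2$, whence $\bigl| 1-(u^2+uv+v^2) \bigr| \le 1+3K^2 =: L$, which yields the claimed estimate. Equivalently one may argue by the mean value theorem: $f(u)-f(v) = f'(\xi)(u-v)$ with $\xi$ between $u$ and $v$ and $f'(\xi)=1-3\xi^2$, so that $L = \sup_{\left| \xi \right| \le K} \left| 1-3\xi^2 \right|$ serves as a valid Lipschitz constant.

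The main obstacle here is conceptual rather than computational: the cubic $f(u)=u-u^3$ is \emph{not} globally Lipschitz on all of $\mathbb{R}$, since $\left| f'(u) \right| = \left| 1-3u^2 \right|$ grows without bound as $\left| u \right| \to \infty$. The estimate therefore genuinely relies on an a priori uniform bound for the arguments, which is exactly what the smoothness-on-a-compact-domain assumption supplies; in the fully discrete analysis this bound is moreover consistent with the discrete maximum principle established later in this section, ensuring that the solution values remain confined to a fixed interval so that a single constant $L$ works uniformly across all time steps. Finally, the strict inequality stated in the lemma is understood for $u \neq v$, since both sides vanish when $u=v$.
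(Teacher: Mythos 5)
Your argument is correct, but note that the paper does not actually prove this lemma: it is imported verbatim by citation from \cite{abbaszadeh2020error}, so there is no in-text proof to compare against. What you supply is the standard self-contained justification --- the factorization $u^{3}-v^{3}=(u-v)(u^{2}+uv+v^{2})$, or equivalently the mean value theorem with $f'(\xi)=1-3\xi^{2}$, combined with a uniform bound $|u|,|v|\le K$ coming from continuity on the compact domain $\bar{\Omega}$ --- and this is exactly the argument the cited reference relies on. Your emphasis on the conceptual point is the most valuable part: $f(u)=u-u^{3}$ is \emph{not} globally Lipschitz, so the lemma as stated is only meaningful once the arguments are confined to a bounded set, and you correctly tie this to the discrete maximum principle (Theorem \ref{th2.2}), which with $\|\bm{u}^{k}\|_{\infty}\le 1$ gives the concrete constant $L=\sup_{|\xi|\le 1}|1-3\xi^{2}|=2$. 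Two trivial remarks: with $L=1+3K^{2}$ your bound is $\le$ rather than the stated strict $<$ (take any larger $L$, or restrict to $u\neq v$ and note the factor $|1-(u^{2}+uv+v^{2})|$ need not attain its supremum); and the hypothesis $u\in C^{\infty}(\Omega)$ is far stronger than needed --- boundedness of the arguments is the only thing actually used.
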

Consequently, $f$ is continuous differentiable and bounded in a neighborhood of the solution $u$.
The matrices $A_x$ and $A_y$ are circulant matrices and can be diagonalized.
Then, we have the following property.
\begin{proposition}(\cite{van1992FFT}) \label{proposition1}
The matrices $A$ and $\tilde{A}$ in \eqref{eq2.1} can be diagonalized as
\begin{equation*}
A = \left( F_y^{*} \otimes F_x^{*} \right) \Lambda \left( F_y \otimes F_x \right), \quad
\tilde{A} = \left( F_y^{*} \otimes F_x^{*} \right) \tilde{\Lambda} \left( F_y \otimes F_x \right),
\end{equation*}
where $\tilde{\Lambda} = - \kappa \Lambda^2$.
The diagonal matrix
\begin{equation*}
\Lambda = I_y \otimes \Lambda_x + \Lambda_y \otimes I_x
\triangleq \mathrm{diag}(\lambda_1,\lambda_2,\ldots,\lambda_{N_x N_y})
\in \mathbb{R}^{N_x N_y \times N_x N_y}
\end{equation*}
contains all the eigenvalues of $A$
with
\begin{equation*}
	\Lambda_x = -\frac{4}{h_x^2}\, \mathrm{diag} \left( 0, \sin^2 \frac{\pi}{N_x}, \ldots,
	\sin^2 \frac{(N_x - 1)\pi}{N_x} \right) ~
\mathit{and} ~
	\Lambda_y = -\frac{4}{h_y^2}\, \mathrm{diag} \left( 0, \sin^2 \frac{\pi}{N_y}, \ldots,
	\sin^2 \frac{(N_y - 1)\pi}{N_y} \right).
\end{equation*}
Here, $F_s$ and $F_s^{*}$ ($s=x,y$) are discrete Fourier matrix and its conjugate transpose, respectively.
\end{proposition}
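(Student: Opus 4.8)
The plan is to exploit the circulant structure induced by the periodic boundary conditions, diagonalize the one-dimensional operators $A_x$ and $A_y$ by the discrete Fourier transform, and then lift the result to two dimensions through the Kronecker-product algebra. First I would observe that each $A_s$ ($s=x,y$) is a symmetric circulant matrix whose first row is $\frac{1}{h_s^2}[-2,1,0,\ldots,0,1]$, the corner entries arising precisely from the periodicity. It is classical that every circulant matrix of order $N_s$ is unitarily diagonalized by the normalized discrete Fourier matrix $F_s$, so that $A_s = F_s^{*}\Lambda_s F_s$ with $F_s^{*}F_s = F_s F_s^{*}$ equal to the identity matrix of order $N_s$.

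Next I would compute the eigenvalues explicitly. Writing $\omega = e^{2\pi\mathrm{i}/N_s}$, the $k$-th eigenvalue of $A_s$ is the symbol of the circulant evaluated at $\omega^k$, namely $\frac{1}{h_s^2}(-2 + \omega^{k} + \omega^{-k}) = \frac{1}{h_s^2}(-2 + 2\cos(2\pi k/N_s))$ for $k = 0,1,\ldots,N_s-1$. Using the half-angle identity $2 - 2\cos\theta = 4\sin^2(\theta/2)$, this becomes $-\frac{4}{h_s^2}\sin^2(k\pi/N_s)$, which is exactly the $k$-th diagonal entry of $\Lambda_s$ in the statement.

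For the two-dimensional operator $A = I_y \otimes A_x + A_y \otimes I_x$, I would substitute the one-dimensional decompositions and repeatedly apply the mixed-product rule $(P\otimes Q)(R\otimes S) = (PR)\otimes(QS)$ together with the unitarity of $F_s$. This yields $I_y \otimes A_x = (F_y^{*}\otimes F_x^{*})(I_y \otimes \Lambda_x)(F_y \otimes F_x)$ and $A_y \otimes I_x = (F_y^{*}\otimes F_x^{*})(\Lambda_y \otimes I_x)(F_y \otimes F_x)$; adding these and factoring out the common Fourier factors gives $A = (F_y^{*}\otimes F_x^{*})\,\Lambda\,(F_y \otimes F_x)$ with $\Lambda = I_y \otimes \Lambda_x + \Lambda_y \otimes I_x$, as claimed. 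Finally, for $\tilde{A} = -\kappa A^2$ I would square this factorization, noting that the inner factor $(F_y \otimes F_x)(F_y^{*}\otimes F_x^{*}) = (F_yF_y^{*})\otimes(F_xF_x^{*})$ collapses to the identity, leaving $A^2 = (F_y^{*}\otimes F_x^{*})\,\Lambda^2\,(F_y\otimes F_x)$ and hence $\tilde{A} = (F_y^{*}\otimes F_x^{*})\,\tilde{\Lambda}\,(F_y\otimes F_x)$ with $\tilde{\Lambda} = -\kappa\Lambda^2$.

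The argument is essentially bookkeeping once the circulant structure is recognized; the only place demanding care is the consistent use of Fourier conventions. I expect the main (minor) obstacle to be matching the transform normalization and the index ordering in the Kronecker products, so that the relations $F_s^{*}F_s = F_sF_s^{*} = I$ hold and the eigenvector matrix is genuinely $F_y^{*}\otimes F_x^{*}$ rather than its transpose; pinning down this convention is what guarantees that the squaring step for $\tilde{A}$ telescopes cleanly.
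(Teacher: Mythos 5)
Your proof is correct and follows exactly the route the paper relies on: the paper offers no written proof, merely noting that $A_x$ and $A_y$ are circulant and citing the standard FFT diagonalization result, and your argument (unitary DFT diagonalization of each circulant block, the half-angle identity for the eigenvalues, the mixed-product rule for the Kronecker lift, and squaring for $\tilde{A}=-\kappa A^2$) is precisely the standard derivation behind that citation. The only point deserving the care you already flag is the DFT normalization convention ensuring $F_s^{*}F_s=I$, which the paper implicitly assumes.
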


Obviously, $A$ and $\tilde{A}$ are negative semi-definite
and positive semi-definite matrices, respectively.
With these information at hand, we immediately get the following result.
\begin{lemma}\label{lemma2.1}
For $t > 0$, the matrix $B = \tilde{A} + A$ satisfies
\begin{itemize}
\item[(i)] {$\left\| e^{t B} \right\|_2 \leq 1$;}
\item[(ii)] {$\left\| e^{t B} (-B)^{\alpha} \right\|_2 \leq C_2 \; t^{-\alpha},\quad\alpha \geq 0$.}
\end{itemize}
Here, $\| \cdot \|_2$ is the spectral norm of a matrix and
the constant $C_2 = \max \left\{ 1, \alpha^{\alpha} e^{-\alpha} \right\}$
is independent of $t, h_x, h_y$ and the dimension of $B$.
\end{lemma}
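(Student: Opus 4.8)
The plan is to exploit the fact that $A$, $\tilde{A}=-\kappa A^2$, and hence $B=A-\kappa A^2$ are all polynomials in the single matrix $A$, so by Proposition~\ref{proposition1} they are simultaneously diagonalized by the \emph{unitary} Fourier factor $F_y\otimes F_x$. This reduces both estimates to elementary scalar bounds on the eigenvalues. First I would write $B=(F_y^{*}\otimes F_x^{*})(\Lambda-\kappa\Lambda^2)(F_y\otimes F_x)$, so that the eigenvalues of $B$ are $\mu_j=\lambda_j-\kappa\lambda_j^2$ for $j=1,\ldots,N_xN_y$. Since each $\lambda_j\leq 0$ (the discrete Laplacian $A$ is negative semi-definite) and $\kappa>0$, both summands are non-positive, whence $\mu_j\leq 0$ for every $j$. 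Because $F_y\otimes F_x$ is unitary, for any scalar function $\phi$ the matrix $\phi(B)$ satisfies $\left\|\phi(B)\right\|_2=\max_j|\phi(\mu_j)|$.

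For part (i), take $\phi(\mu)=e^{t\mu}$; then $\left\|e^{tB}\right\|_2=\max_j e^{t\mu_j}\leq 1$, since $t>0$ and $\mu_j\leq 0$ force $t\mu_j\leq 0$. For part (ii), take $\phi(\mu)=e^{t\mu}(-\mu)^{\alpha}$, so that $\left\|e^{tB}(-B)^{\alpha}\right\|_2=\max_j e^{t\mu_j}(-\mu_j)^{\alpha}$. Writing $s=-\mu_j\geq 0$, the problem reduces to maximizing the scalar function $g(s)=s^{\alpha}e^{-ts}$ on $[0,\infty)$. A short calculus computation, namely that $g'(s)=s^{\alpha-1}e^{-ts}(\alpha-ts)$ vanishes at $s=\alpha/t$, gives the global maximum $g(\alpha/t)=\alpha^{\alpha}e^{-\alpha}t^{-\alpha}$. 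Hence $\left\|e^{tB}(-B)^{\alpha}\right\|_2\leq\alpha^{\alpha}e^{-\alpha}t^{-\alpha}\leq C_2\,t^{-\alpha}$ with $C_2=\max\left\{1,\alpha^{\alpha}e^{-\alpha}\right\}$; the maximum against $1$ simply absorbs the degenerate case $\alpha=0$, where $g\equiv 1$ and the bound of part (i) is recovered.

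The argument is essentially self-contained once the simultaneous diagonalization is in place, and the two points requiring care are the following. First, one must confirm that the spectral norm of a function of the normal matrix $B$ equals the maximum modulus over its eigenvalues; this is exactly where the unitarity of the Fourier factors from Proposition~\ref{proposition1} is used, and it is what lets me pass from $B$ to the scalar variable $s$. Second, one must verify that $C_2$ is independent of $t,h_x,h_y$ and the dimension of $B$. This holds because the scalar maximization of $g(s)$ is performed over the \emph{full} half-line $[0,\infty)$ and never references the particular locations of the discrete eigenvalues $\mu_j$; the optimizing value $\alpha^{\alpha}e^{-\alpha}t^{-\alpha}$ depends only on $\alpha$ and $t$, so shrinking $h_x,h_y$ (which pushes the $\mu_j$ towards $-\infty$) and enlarging the dimension can only remain within this uniform envelope.

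I do not anticipate a genuine obstacle here: the statement is the standard analyticity/smoothing estimate for the semigroup generated by a symmetric negative semi-definite matrix, and the only mild subtlety is the bookkeeping over which quantities the constant is permitted to depend on. The hardest conceptual step is simply recognizing that $B$ inherits its eigenbasis from $A$, after which everything collapses to the one-variable optimization above.
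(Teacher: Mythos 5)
Your proposal is correct and follows essentially the same route as the paper: both use the simultaneous diagonalization of $A$ and $\tilde{A}$ by the unitary Fourier factors from Proposition~\ref{proposition1} to reduce the matrix bounds to scalar eigenvalue estimates, and both obtain the constant $\alpha^{\alpha}e^{-\alpha}$ from the same one-variable optimization (the paper maximizes $r(\theta)=(-\theta)^{\alpha}e^{\theta}$ at $\theta=-\alpha$, which is your maximization of $s^{\alpha}e^{-ts}$ at $s=\alpha/t$ after the substitution $\theta=-ts$). No gaps.
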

\begin{proof}
(i) According to the decomposition in Proposition \ref{proposition1},
we know that all eigenvalues of $B$ are non-positive.
Then, we get
\begin{equation*}
\left\| e^{t B} \right\|_2 \leq 1.
\end{equation*}

(ii) If $\alpha = 0$, it just the inequality (i).
Next, we prove the result (ii) for $\alpha > 0 $.
We first give an upper bound of
$\left\| \left( -t B \right)^{\alpha} e^{t B} \right\|_2 (\alpha > 0)$.
Then, the target result is followed.
Denote by $\lambda_i^{B} = - \kappa \lambda_i^2 + \lambda_i$ ($i = 1, \ldots, N_x N_y$)
the $i$-th eigenvalue of $B$.
For $\alpha > 0 $, we have
\begin{equation*}
\left\| \left( -t B \right)^{\alpha} e^{t B} \right\|_2
= \left\| \mathrm{diag} \left( \left(-t \lambda_1^{B} \right)^{\alpha} e^{t \lambda_1^{B}},
\left(-t \lambda_2^{B} \right)^{\alpha} e^{t \lambda_2^{B}}, \ldots,
\left(-t \lambda_{N_x N_y}^{B} \right)^{\alpha} e^{t \lambda_{N_x N_y}^{B}} \right) \right\|_2.
\end{equation*}
Define an auxiliary function
\begin{equation*}
r(\theta) = (-\theta)^{\alpha} e^{\theta} \geq 0, \qquad \theta \leq 0, \alpha > 0.
\end{equation*}
It is easy to check that $\lim\limits_{\theta \rightarrow - \infty} r(\theta) = 0$.
The maximum of $r(\theta)$ is
\begin{equation*}
r(-\alpha) = \alpha^{\alpha} e^{-\alpha}.
\end{equation*}
Then, we obtain
\begin{equation*}
\left\| \left( -t B \right)^{\alpha} e^{t B} \right\|_2 \leq C_2, \quad t > 0, \alpha \geq 0.
\end{equation*}
The proof is completed.
\end{proof}

With the help of Lemmas \ref{assumption1} and \ref{lemma2.1},
we show the convergence of the FRS scheme \eqref{eq2.5}.
\begin{theorem}\label{th2.1}
Assume $u \in C^{\infty} (\Omega)$.
Temporal convergence order of the FRS scheme \eqref{eq2.5} is $1$ and the global error satisfies
\begin{equation*}
\left\| \mathcal{U}(t_k) - \bm{u}^k \right\|
\leq C \tau \left( 1 + | \log \tau | \right) + \tilde{C} \left( h_x^2 + h_y^2 \right), \quad 0 \leq k \tau \leq T,
\end{equation*}
where
\begin{equation*}
\mathcal{U}(t_k) = \left[u(x_1, y_1,t_k), \ldots, u(x_{N_x},y_1,t_k), \ldots,
u(x_1, y_{N_y},t_k), \ldots, u(x_{N_x},y_{N_y},t_k) \right]^{\top},
\end{equation*}
the constants $C$ and $\tilde{C}$ are independent of $\tau$ and $k$.
\end{theorem}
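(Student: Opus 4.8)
The plan is to separate the two sources of error by inserting the exact solution $\bm u_\ast(t)$ of the semi-discrete system \eqref{eq2.1} (the method-of-lines solution on the fixed spatial grid) and writing
\[
\|\mathcal{U}(t_k) - \bm u^k\| \le \|\mathcal{U}(t_k) - \bm u_\ast(t_k)\| + \|\bm u_\ast(t_k) - \bm u^k\|.
\]
The first term is the finite-difference semi-discretization error; since $\delta_s$ and $\delta_s^2$ are consistent of order $\mathcal{O}(h_x^2+h_y^2)$, I would combine this consistency with the contractivity $\|e^{tB}\|_2\le 1$ of Lemma \ref{lemma2.1}(i) and the Lipschitz bound of Lemma \ref{assumption1} in a variation-of-constants/Gronwall argument to obtain $\|\mathcal{U}(t_k)-\bm u_\ast(t_k)\|\le \tilde C(h_x^2+h_y^2)$ uniformly on $[0,T]$. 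The remaining term $\|\bm u_\ast(t_k)-\bm u^k\|$ is the genuine time-splitting error and carries the $\tau(1+|\log\tau|)$ contribution.

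The key structural observation is that $A$ and $\tilde A=-\kappa A^2$ are simultaneously diagonalized by the Fourier basis of Proposition \ref{proposition1} and therefore commute, so that $\Phi_\tau^A\circ\Phi_\tau^{\tilde A}=e^{\tau A}e^{\tau\tilde A}=e^{\tau B}$ holds exactly with $B=\tilde A+A$. Hence the scheme \eqref{eq2.5} reduces to the Lie splitting $\mathcal{L}_\tau=e^{\tau B}\circ\Phi_\tau^f$ of the semilinear problem $\dot{\bm u}=B\bm u+f(\bm u)$, and no error is incurred between the two linear flows. Denoting by $\Psi_\tau$ the exact flow of \eqref{eq2.1}, I would then estimate the local error $\bm d_j=\Psi_\tau(\bm u_\ast(t_{j-1}))-\mathcal{L}_\tau(\bm u_\ast(t_{j-1}))$. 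Using the variation-of-constants formula $\Psi_\tau(\bm v)=e^{\tau B}\bm v+\int_0^\tau e^{(\tau-s)B}f(\Psi_s(\bm v))\,ds$ and the expansion $\Phi_\tau^f(\bm v)=\bm v+\tau f(\bm v)+\mathcal{O}(\tau^2)$, the leading defect is
\[
\left(\tau e^{\tau B}-\int_0^\tau e^{(\tau-s)B}\,ds\right)f(\bm v)=\int_0^\tau\!\!\int_\sigma^\tau Be^{rB}\,dr\,d\sigma\,f(\bm v),
\]
plus smoother $\mathcal{O}(\tau^2)$ remainders that the Lipschitz bound of Lemma \ref{assumption1} controls. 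The stiff factor is handled by the smoothing estimate $\|Be^{rB}\|_2\le C_2\,r^{-1}$ of Lemma \ref{lemma2.1}(ii) with $\alpha=1$, which shows that each local error carries essentially one power of $B$ against the parabolic smoothing.

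Finally, I would accumulate the local errors by a \emph{Lady Windermere's fan} argument: writing $\bm u_\ast(t_k)-\bm u^k=\sum_{j=1}^k\mathcal{L}_\tau^{\,k-j}\bm d_j$ up to Lipschitz-controlled correction terms and using the uniform stability $\|e^{tB}\|_2\le 1$ of the propagator, the stiff part of each propagated defect inherits a factor $\|e^{(t_k-t_j)B}(-B)\|_2\le C_2/(t_k-t_j)$ from Lemma \ref{lemma2.1}(ii). The resulting harmonic sum $\tau^2\sum_{j=1}^{k-1}(t_k-t_j)^{-1}=\tau\sum_{m=1}^{k-1}m^{-1}$ is bounded by $C\tau(1+|\log\tau|)$, while the final step $j=k$, for which the smoothing cannot be used, contributes the remaining $\mathcal{O}(\tau)$; combining with the spatial estimate yields the asserted bound. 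The main obstacle is exactly this stiff accumulation: because $B$ is unbounded as $h_x,h_y\to 0$, a naive Taylor estimate with a constant depending on $\|B\|_2$ is useless, and one must trade every power of $B$ against the smoothing of $e^{tB}$ and verify that the temporal accumulation costs only the logarithmic factor $|\log\tau|$ rather than a negative power of $\tau$, keeping all constants independent of $h_x,h_y$ and the dimension of $B$ as guaranteed by Lemma \ref{lemma2.1}.
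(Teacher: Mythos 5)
Your proposal is correct and follows essentially the same route as the paper: split off the $\mathcal{O}(h_x^2+h_y^2)$ spatial error, compare the splitting step with the variation-of-constants representation of the exact semi-discrete flow, control the stiff part of the local defect by the smoothing bound $\left\| e^{tB}(-B)^{\alpha}\right\|_2 \le C_2 t^{-\alpha}$ of Lemma \ref{lemma2.1}(ii), and accumulate via the harmonic sum to get the $\tau(1+|\log\tau|)$ factor --- exactly the argument of the Einkemmer--Ostermann references the paper defers to. You in fact supply more detail than the paper, which only writes down the local error and omits the accumulation step; your explicit remark that $A$ and $\tilde{A}$ commute (so $\Phi_{\tau}^{A}\circ\Phi_{\tau}^{\tilde{A}}=e^{\tau B}$ incurs no splitting error) is used implicitly but never stated in the paper's proof.
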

\begin{proof}
We notice that
\begin{equation*}
\left\| \mathcal{U}(t_k) - \bm{u}^k \right\|
\leq \left\| \bm{u}(t_k) - \bm{u}^k \right\| + \left\| \bm{u}(t_k) - \bm{u}^k \right\|
\leq \left\| \bm{u}(t_k) - \bm{u}^k \right\| + \tilde{C} \left( h_x^2 + h_y^2 \right).
\end{equation*}
Thus, the remaining is to estimate $\left\| \bm{u}(t_k) - \bm{u}^k \right\|$.
For the FRS scheme \eqref{eq2.5} with initial value $\bm{u}^{k - 1}$, it gives
\begin{equation*}
\begin{split}
\bm{u}^k = \mathcal{L}_{\tau} \bm{u}^{k - 1} & =
\Phi_{\tau}^{A} \left( \Phi_{\tau}^{\tilde{A}} \left( \Phi_{\tau}^{f}
\left( \bm{u}^{k - 1} \right) \right) \right) \\
& = e^{\tau (A + \tilde{A})} \bm{u}^{k - 1} + \tau e^{\tau (A + \tilde{A})} f(t_{k - 1}, \bm{u}^{k - 1}) +
\int_{0}^{\tau} e^{\tau (A + \tilde{A})} (\tau - s) {\omega}''(t_{k - 1} + s)\, ds.
\end{split}
\end{equation*}
On the other hand, the exact solution of \eqref{eq2.1} with initial value $\mathcal{U}(t_{k - 1})$
can be written as
\begin{equation*}
\begin{split}
\mathcal{U}(t_k) = e^{\tau (A + \tilde{A})} \mathcal{U}(t_{k - 1}) +
\int_{0}^{\tau} e^{(\tau - s) (A + \tilde{A})} f \left( t_{k - 1}, \mathcal{U}(t_{k - 1} + s) \right)\, ds.
\end{split}
\end{equation*}
Then, we have the local error
\begin{equation*}
\bm{u}^k - \mathcal{U}(t_k) = \int_{0}^{\tau} e^{\tau (A + \tilde{A})} (\tau - s)
{\omega}''(t_{k - 1} + s)\, ds -
\int_{0}^{\tau} \int_{0}^{s} {\ell}'_k (\xi)\, d\xi ds,
\end{equation*}
where
\begin{equation*}
\ell_k (\xi) = e^{(\tau - \xi) (A + \tilde{A})} f \left( t_{k - 1} + \xi, \mathcal{U}(t_{k - 1} + \xi) \right).
\end{equation*}

The remaining proof is very similar to \cite{ostermann2019convergence,einkemmer2015overcoming}.
Thus, we omit it here. The proof is completed.
\end{proof}

At the end of this section, we show that the FRS scheme \eqref{eq2.5}
preserves the discrete maximum principle unconditionally.
\begin{theorem}\label{th2.2}
Suppose the initial value satisfies $\left\| \bm{u}^0 \right\|_{\infty} \leq 1$.
Then, the FRS scheme \eqref{eq2.5} preserves the discrete maximum principle for any $\tau > 0$,
i.e.,
\begin{equation*}
\left\| \bm{u}^k \right\|_{\infty} \leq 1, \quad \mathit{for~all~} k \geq 0,
\end{equation*}
where $\left\| \cdot \right\|_{\infty}$ is the standard infinity norm for a vector or a matrix.
\end{theorem}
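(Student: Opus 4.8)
The plan is to argue by induction on $k$, showing that the sublevel set $\mathcal{S} = \{\bm{w} : \|\bm{w}\|_{\infty} \le 1\}$ is invariant under each of the three flows composing $\mathcal{L}_{\tau}$. Since $\mathcal{L}_{\tau} = \Phi_{\tau}^{A} \circ \Phi_{\tau}^{\tilde{A}} \circ \Phi_{\tau}^{f}$, once I know that $\Phi_{\tau}^{f}(\mathcal{S}) \subseteq \mathcal{S}$, $\Phi_{\tau}^{\tilde{A}}(\mathcal{S}) \subseteq \mathcal{S}$ and $\Phi_{\tau}^{A}(\mathcal{S}) \subseteq \mathcal{S}$, the inductive step $\|\bm{u}^{k-1}\|_{\infty} \le 1 \Rightarrow \|\bm{u}^{k}\|_{\infty} \le 1$ is immediate, the base case being the hypothesis $\|\bm{u}^0\|_{\infty} \le 1$. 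So the entire argument reduces to treating the three flows separately.

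First I would dispose of the nonlinear flow $\Phi_{\tau}^{f}$. Because $f$ acts componentwise, $\Phi_{\tau}^{f}$ is the time-$\tau$ map of the scalar logistic-type ODE $\omega' = \omega - \omega^3 = \omega(1-\omega)(1+\omega)$, which integrates explicitly to
\begin{equation*}
\omega(\tau) = \frac{\omega_0\, e^{\tau}}{\sqrt{1 + \omega_0^2 (e^{2\tau} - 1)}}.
\end{equation*}
A one-line estimate gives $\omega(\tau)^2 \le 1$ whenever $\omega_0^2 \le 1$ (equivalently, $\pm 1$ are stable fixed points and the flow on $[-1,1]$ points inward), so each component is mapped into $[-1,1]$ and hence $\Phi_{\tau}^{f}(\mathcal{S}) \subseteq \mathcal{S}$ for every $\tau > 0$. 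This is the easy ingredient.

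Next come the linear flows $\Phi_{\tau}^{A} = e^{\tau A}$ and $\Phi_{\tau}^{\tilde{A}} = e^{\tau \tilde{A}}$, for which I must prove $\|e^{\tau A}\|_{\infty} \le 1$ and $\|e^{\tau \tilde{A}}\|_{\infty} \le 1$. The second-order flow is straightforward: $A$ is a discrete periodic Laplacian, so it has nonnegative off-diagonal entries and vanishing row sums. Writing $A = \gamma(P - I)$ with $\gamma = 2/h_x^2 + 2/h_y^2$ and $P$ a row-stochastic matrix yields
\begin{equation*}
e^{\tau A} = e^{-\gamma \tau} \sum_{n = 0}^{\infty} \frac{(\gamma \tau)^n}{n!}\, P^n,
\end{equation*}
which is itself nonnegative with unit row sums, i.e.\ stochastic; hence $\|e^{\tau A}\|_{\infty} = 1$ and $\Phi_{\tau}^{A}(\mathcal{S}) \subseteq \mathcal{S}$.

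The biharmonic flow $\Phi_{\tau}^{\tilde{A}}$ is where I expect the real difficulty. The clean stochastic-matrix argument used for $A$ breaks down for $\tilde{A} = -\kappa A^2$: computing $A^2$ shows its distance-two off-diagonal entries are positive, so those of $\tilde{A}$ are negative, meaning $\tilde{A}$ is not essentially nonnegative and $e^{\tau\tilde{A}}$ will in general have negative entries (certainly for small $\tau$). Consequently $\|e^{\tau\tilde{A}}\|_{\infty}$ is a sum of \emph{absolute} row entries and cannot be read off from the row sums, so the positivity route is unavailable. To push through I would attempt the sharper claim that, despite the sign changes, each row of $e^{\tau\tilde{A}}$ has absolute entries summing to at most $1$, exploiting the circulant structure and the exact diagonalization of Proposition \ref{proposition1} to control the entries; failing a direct bound, I would merge the two commuting linear steps into $e^{\tau A} e^{\tau\tilde{A}} = e^{\tau B}$ with $B = A + \tilde{A}$ and seek an $\infty$-norm contraction for $e^{\tau B}$ directly. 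I regard this biharmonic $\infty$-norm contraction as the crux of the theorem and the step most likely to need a genuinely new idea, since Lemma \ref{lemma2.1} supplies only the spectral-norm bound $\|e^{\tau B}\|_2 \le 1$, which does not transfer to the infinity norm.
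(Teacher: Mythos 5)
Your treatment of two of the three flows is complete and correct: the explicit solution of the scalar ODE $\omega'=\omega-\omega^3$ does show that $[-1,1]$ is invariant under $\Phi_{\tau}^{f}$, and the decomposition $A=\gamma(P-I)$ with $P$ row-stochastic does give $\|e^{\tau A}\|_{\infty}=1$ (on these points you are more self-contained than the paper, which cites an external lemma for the nonlinear step and never separates the two linear flows). But the step you yourself flag as the crux --- an $\infty$-norm contraction for $e^{\tau\tilde{A}}$, equivalently for $e^{\tau B}$ with $B=A+\tilde{A}$ --- is left unproved, so the proposal does not establish the theorem. This is a genuine gap and not a routine omission: as you observe, $\tilde{A}=-\kappa A^2$ has negative off-diagonal entries, $e^{\tau\tilde{A}}$ is sign-changing, and no positivity or stochasticity argument is available.

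It is worth knowing how the paper closes this step, because your skepticism about it is justified. The paper invokes the inequality $\|Q\|_{\infty}\leq\|Q\|_{2}$ for a matrix $Q$ and combines it with Lemma \ref{lemma2.1}(i) to conclude $\|e^{\tau B}\|_{\infty}\leq\|e^{\tau B}\|_{2}\leq 1$; the nonlinear step is then handled by a cited lemma, exactly as in your first paragraph. However, $\|Q\|_{\infty}\leq\|Q\|_{2}$ is false for induced matrix norms: the general relation is only $\|Q\|_{\infty}\leq\sqrt{n}\,\|Q\|_{2}$, and already $Q=\left(\begin{smallmatrix}1&1\\0&0\end{smallmatrix}\right)$ has $\|Q\|_{\infty}=2>\sqrt{2}=\|Q\|_{2}$ (the valid vector inequality $\|x\|_{\infty}\leq\|x\|_{2}$ is presumably the source of the confusion). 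The dimension-dependent factor $\sqrt{N_xN_y}$ destroys the bound, so the spectral estimate of Lemma \ref{lemma2.1} indeed ``does not transfer'' as you suspected. In short, the step you identified as needing a genuinely new idea is also the step at which the paper's own argument is not rigorous; note moreover that the continuous biharmonic heat kernel is sign-changing with $L^1$-norm strictly greater than one, so a uniform bound $\|e^{\tau\tilde{A}}\|_{\infty}\leq 1$ should not be expected to hold without further restrictions on $\kappa$, $\tau$ and the mesh.
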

\begin{proof}
We proceed by induction. Under the assumption, it is true for $k = 0$.
Assume the result holds for $k \leq m$, that is, $\left\| \bm{u}^m \right\|_{\infty} \leq 1$
($k = 1,\ldots,m$).

Next, we check the inequality holds for $k = m + 1$.
We know that for a matrix $Q$, the relation
$\left\| Q \right\|_{\infty} \leq \left\| Q \right\|_{2}$ is true \cite{quarteroni2010numerical}.
Noticing Lemma \ref{lemma2.1}, we obtain
\begin{equation*}
\left\| e^{\tau B} \right\|_{\infty} \leq \left\| e^{\tau B} \right\|_{2} \leq 1.
\end{equation*}
Using Lemma 3.4 in \cite{chen2022second}, we have
\begin{equation*}
\left\| \bm{u}^{m + 1} \right\|_{\infty} =
\left\| \Phi_{\tau}^{A} \circ \Phi_{\tau}^{\tilde{A}} \circ \Phi_{\tau}^{f} (\bm{u}^{m}) \right\|_{\infty}
= \left\| e^{\tau B} \left( \Phi_{\tau}^{f} (\bm{u}^{m}) \right) \right\|_{\infty}
\leq \left\| \Phi_{\tau}^{f} (\bm{u}^{m}) \right\|_{\infty}
\leq 1.
\end{equation*}
This completes the proof of the induction.
\end{proof}

\section{A rank-adaptive low-rank splitting approach}
\label{sec3}

In this section,
we propose a rank-adaptive low-rank splitting (ALRS) approach for Eq.~\eqref{eq1.1}.
We first rewrite subproblems \eqref{eq2.4}, \eqref{eq2.2} and \eqref{eq2.3}
into the equivalent matrix differential equations:
\begin{equation}\label{eq3.1}
\dot{W}(t) = f(W(t)) = W(t) - W(t).^3, \qquad W(0) = W_0,
\end{equation}
\begin{equation}\label{eq3.2}
	\dot{K}(t) = \tilde{A} \circledast K(t), \quad K(t_0) = K_0
\end{equation}
and
\begin{equation}\label{eq3.3}
	\dot{\Psi}(t) = A_x \Psi(t) + \Psi(t) A_y, \quad \Psi(t_0) = \Psi_0,
\end{equation}
respectively.
Here $\dot{W}(t) = \frac{d W(t)}{d t}$, $\dot{K}(t) = \frac{d K(t)}{d t}$,
$\dot{\Psi}(t) = \frac{d \Psi(t)}{d t}$ and $.^3$ is elementwise operation.
It is difficult to rewrite the subproblem \eqref{eq2.2} into a matrix form directly,
since the matrix $\tilde{A}$ is hard to be decoupled.
Thanks to the decomposition of $\tilde{A}$, we can transform the matrix-vector multiplication
(i.e., $\tilde{A} v$, $v$ is a vector) to a new matrix-matrix operation,
that is, ``$\circledast$", cf.~Algorithm \ref{alg1}.
With this new operation, the subproblem \eqref{eq2.2} will be equivalent to \eqref{eq3.2}.
The time-dependent matrix $W(t) \in \mathbb{R}^{N_x \times N_y}$ is obtained
by reshaping the vector $\bm{\omega}(t)$ into a matrix,
i.e., $W(t) = \mathrm{reshape}(\bm{\omega}(t),N_x,N_y)$.
Similarly, $K(t) = \mathrm{reshape}(\bm{v}(t),N_x,N_y)$ and
$\Psi(t) = \mathrm{reshape}(\bm{\varphi}(t),N_x,N_y)$.
\begin{algorithm}[ht]
	\caption{Compute $Z = \tilde{A} \circledast H$ }
	\begin{algorithmic}[1]
		\STATE {Given a matrix $H \in \mathbb{R}^{N_x \times N_y}$}
		\STATE {Compute
			\begin{align*}
				\Lambda_x = -\frac{4}{h_x^2} \left[ 0, \sin^2 \frac{\pi}{N_x}, \sin^2 \frac{2 \pi}{N_x}, \ldots, \sin^2 \frac{(N_x - 1) \pi}{N_x} \right]^T, \\
				\Lambda_y = -\frac{4}{h_y^2} \left[ 0, \sin^2 \frac{\pi}{N_y}, \sin^2 \frac{2 \pi}{N_y},
				\ldots, \sin^2 \frac{(N_y - 1) \pi}{N_y} \right]^T
			\end{align*}
			and
			$\tilde{\Lambda} = -\kappa \left( \bm{1}_{N_x,N_y} + \Lambda_x\; \bm{1}_{1,N_y} + \bm{1}_{N_x,1}\; \Lambda_y^T \right).^ 2$ \quad ($.^2$ is elementwise operation.)}
		\STATE {Compute $V_1 = \mathtt{fft2}(H)$ \quad ($\mathtt{fft2}$ represents 2D discrete Fourier transform.)}
		\STATE {Compute $V_2 = \tilde{\Lambda} \bullet V_1$ \quad ($\bullet$ means Hadamard product.)}
		\STATE {Compute $Z = \mathtt{real} \left( \mathtt{ifft2}(V_2) \right)$ \quad ($\mathtt{ifft2}$ represents the inverse of $\mathtt{fft2}$, $\mathtt{real}$ the real part of a complex number.)}
	\end{algorithmic}
	\label{alg1}
\end{algorithm}

Denote by $\mathcal{M}_r = \left\{ Q(t) \in \mathbb{R}^{N_x \times N_y} \mid \mathrm{rank}(Q(t)) = r\right\}$
the manifold of rank-$r$ matrices.
Let $\mathcal{T}_{\tilde{W}(t)} \mathcal{M}_r$ be the tangent space of $\mathcal{M}_r$ at $\tilde{W}(t)$.
According to \cite{koch2007dynamical}, a low-rank numerical solution of \eqref{eq3.1} is
obtained by solving the following optimization problem
\begin{equation*}
\min_{\tilde{W}(t) \in \mathcal{M}_r} \left\| \dot{\tilde{W}}(t) - \dot{W}(t) \right\|_{F},\quad
\mathrm{s.t.}~ \dot{\tilde{W}}(t) \in \mathcal{T}_{\tilde{W}(t)} \mathcal{M}_r,
\end{equation*}
where $\| \cdot \|_F$ represents the Frobenius norm.
Clearly, this optimization problem is equivalent to the following evolution equation
\begin{equation}\label{eq3.4}
\dot{\tilde{W}}(t) = P(\tilde{W}(t))\, f(\tilde{W}(t)), \quad \tilde{W}(t_0) = W_0 \in \mathcal{M}_r,
\end{equation}
where $P(\tilde{W}(t))$ is the orthogonal projection onto $\mathcal{T}_{\tilde{W}(t)} \mathcal{M}_r$.
This equation can be solved by employing projector-splitting integrators \cite{lubich2014projector, Ceruti2022unconventional} or the low-rank Parareal \cite{Carrel2022DLRparareal}.
Considering the optimal rank required for approximating Eq.~\eqref{eq3.1}
may vary strongly over time.
Thus, we choose the rank-adaptive integrator \cite{Ceruti2022adaptiveDLR} for solving \eqref{eq3.4}.
Moreover, denote by $\Phi_{\tau,r}^f (W_0)$ the low-rank approach of \eqref{eq3.1}.

Notice that \eqref{eq3.2} is not rank preserving.
On the other hand, it becomes a stiff problem, if $\kappa$ is large enough.
If the DLRA is used for the subproblem \eqref{eq3.2}, some stiff ODE solvers
should be considered in each substep of the rank-adaptive integrator \cite{Ceruti2022adaptiveDLR}.
In this work, we choose another way to find a low-rank numerical solution of \eqref{eq3.2}.
More precisely, a low-rank numerical solution of \eqref{eq3.2} with initial value $K_0$
through the following optimization problem:
\begin{equation*}
	\min_{\tilde{K} \in \mathcal{M}_r} \left\| \tilde{K} -
	e^{\tau \tilde{A}} \circledast K_0 \right\|_{F}.
\end{equation*}
Obviously, this optimization problem can be solved by employing a truncated singular value decomposition.
Furthermore, we denote the low-rank solution of \eqref{eq3.2} as $\Phi_{\tau,r}^{\tilde{A}} (K_0)$
($K_0 \in \mathcal{M}_r$).

For the subproblem \eqref{eq3.3}, it is rank preserving \cite[Lemma 1.22]{helmke1996optimization}.
This means that for a given rank-$r$ initial value $Y_0$,
the solution of
\begin{equation*}
\dot{Y}(t) = A_x Y(t) + Y(t) A_y, \quad Y(t_0) = Y_0
\end{equation*}
is still rank-$r$ for all $t$.
The exact solution of this equation at $t_1$ is
\begin{equation*}
Y(t_1) = e^{\tau A_x}\, Y_0\, e^{\tau A_y}.
\end{equation*}
This expression is the same as the exact solution of \eqref{eq3.3}.
The only difference is the initial value.
Thus, the low-rank solution of \eqref{eq3.3} is denoted as $\Phi_{\tau}^{A} (\Psi_0)$
($\Psi_0 \in \mathcal{M}_r$).

With the help of these notations, our ALRS procedure is given as follows
\begin{equation}\label{eq3.5}
\mathcal{L}_{\tau,r} = \Phi_{\tau}^{A} \circ \Phi_{\tau,r}^{\tilde{A}} \circ \Phi_{\tau,r}^{f}.
\end{equation}

Let $U^0$ be a rank-$r_0$ approximation of the initial value $u(x,y,0)$.
Starting with $W_0 = U^0$, we obtain the low-rank solution of \eqref{eq1.1} at $t_k$
\begin{equation*}
U^k = \mathcal{L}_{\tau,r}^k (U^0).
\end{equation*}

\section{Numerical experiments}
\label{sec4}

In this section, two examples are provided to show the performance of our proposed methods.
Example 1 shows the observed convergence orders of \eqref{eq2.5} and \eqref{eq3.5}.
Example 2 simulates mean curvature effect motion for various initial shapes.
In these examples, we fix $N_x = N_y = N$.
For the rank-adaptive integrator \cite{Ceruti2022adaptiveDLR},
we set the truncation tolerance $\vartheta = 10^{-3}$.
Some notations that will appear later are collected here:
\begin{equation*}
	Err_\infty(h, \tau) = \max_{0 \leq i \leq N} \mid u(x_i,T) - u_{i}^{M} \mid,~
	Err_2(h, \tau) = \| \mathcal{U}(T) - \bm{u}^M \|,~
	\textrm{relerr}(\tau,h) =
	\frac{\left\| U^M - \mathcal{U}(T) \right\|_{F}}{\left\| \mathcal{U}(T) \right\|_{F}},
\end{equation*}
\begin{equation*}
	CO_{\infty, \tau} =\log_{\tau_1/ \tau_2} \frac{Err_\infty(h, \tau_1)}{Err_\infty(h, \tau_2)}, \quad
	CO_{2, \tau} =\log_{\tau_1/ \tau_2} \frac{Err_2(h, \tau_1)}{Err_2(h, \tau_2)},
\end{equation*}
\begin{equation*}
	CO_{\infty, h} =\log_{h_1/ h_2} \frac{Err_\infty(h_1, \tau)}{Err_\infty(h_2, \tau)}, \quad
	CO_{2, h} =\log_{h_1/ h_2} \frac{Err_2(h_1, \tau)}{Err_2(h_2, \tau)},
\end{equation*}
\begin{equation*}
	\textrm{rate}_{\tau} = \log_{\tau_1/\tau_2} \frac{\textrm{relerr}(\tau_1,h)}{\textrm{relerr}(\tau_2,h)}
	\quad \mathrm{and} \quad
	\textrm{rate}_{h} = \log_{h_1/h_2} \frac{\textrm{relerr}(\tau,h_1)}{\textrm{relerr}(\tau,h_2)}.
\end{equation*}

All experiments were performed on a Windows 10 (64 bit) PC-Intel(R) Core(TM) 1135G7
CPU 2.40 GHz, 24 GB of RAM using MATLAB R2018b.

\noindent\textbf{Example 1.}~(Convergence test) Considering Eq.~\eqref{eq1.1} with $\kappa = 0.01$,
$\Omega = [0,32]^2$, $T = 1$ and
\begin{equation*}
\begin{split}
u_0(x,y) = & 0.1 - 0.2 \cos \left( \frac{2 \pi (x - 12)}{32} \right)\,
\sin \left( \frac{2 \pi (y - 1)}{32} \right) +
0.1 \cos^2 \left( \frac{\pi (x + 10)}{32} \right)\, \sin^2 \left( \frac{\pi (y + 3)}{32} \right) - \\
& 0.2 \sin^2 \left( \frac{4 \pi x}{32} \right)\, \cos \left( \frac{4 \pi (y - 6)}{32} \right).
\end{split}
\end{equation*}
The exact solution is unknown.
Thus, the numerical solution on the fine mesh ($M = 2048, N = 1024$) computed by the FRS scheme \eqref{eq2.5}
is treated as the reference solution.

Tables \ref{tab1} and \ref{tab2} report the observed time and
space convergence orders of the FRS scheme \eqref{eq2.5}.
From these tables, convergence orders of \eqref{eq2.5} in time and space indeed $1$ and $2$,
respectively. This is consistent with our theoretical analysis.
For the ALRS \eqref{eq3.5}, the relative errors and observed convergence orders
are listed in Tables \ref{tab3} and \ref{tab4}.
As we can see in these tables, for a small initial rank (i.e., $r_0 = 3$),
a stagnation of the error is observed.
The reason for this is that a too low rank produces numerical solutions of low quality.
For the case $r_0 = 4$, the observed time and space convergence orders is $1$ and $2$,
respectively. This coincides with our expectations.
Fig.~\ref{fig1} show the maximum norm and the energy of the numerical solution
computed by \eqref{eq2.5} and \eqref{eq3.5}.
We see that our methods can preserve the maximum principle and energy decaying.
Fig.~\ref{fig2} compares the rank of the numerical solution
computed by \eqref{eq2.5} and \eqref{eq3.5}.
We observe that the effective rank of the solution stays low during its evolution in time.
The rank of the low-rank solution is lower than that of the solution computed by \eqref{eq2.5}.
\begin{table}[ht]\tabcolsep=8pt
	\caption{Numerical errors and observed time convergence orders of \eqref{eq2.5} for Example 1
		with $N = 1024$.}
	\centering
	\begin{tabular}{ccccc}
		\hline
		$M$ & $Err_\infty(h,\tau)$ & $CO_{\infty, \tau}$ & $Err_2(h,\tau)$ & $CO_{2, \tau}$ \\
		\hline
16 & 1.4104E-03 & -- & 1.1510E-02 & -- \\	
32 & 6.9556E-04 & 1.0199 & 5.6540E-03 & 1.0256 \\
64 & 3.4124E-04 & 1.0274 & 2.7685E-03 & 1.0302 \\
128 & 1.6487E-04 & 1.0495 & 1.3363E-03 & 1.0508 \\
256 & 7.6881E-05 & 1.1006 & 6.2285E-04 & 1.1013 \\
		\hline
	\end{tabular}
	\label{tab1}
\end{table}
\begin{table}[ht]\tabcolsep=8pt
	\caption{Numerical errors and observed space convergence orders of \eqref{eq2.5} for Example 1
		with $M = 2048$.}
	\centering
	\begin{tabular}{ccccc}
		\hline
		$N$ & $Err_\infty(h,\tau)$ & $CO_{\infty, h}$ & $Err_2(h,\tau)$ & $CO_{2, h}$ \\
		\hline
		16 & 1.9370E-02 & -- & 2.4071E-01 & -- \\	
		32 & 4.9034E-03 & 1.9819 & 5.8648E-02 & 2.0372 \\
		64 & 1.2249E-03 & 2.0012 & 1.4358E-02 & 2.0302 \\
		128 & 3.0273E-04 & 2.0165 & 3.5132E-03 & 2.0310 \\
		256 & 7.2084E-05 & 2.0703 & 8.3234E-04 & 2.0775 \\
		\hline
	\end{tabular}
	\label{tab2}
\end{table}
\begin{table}[H]\tabcolsep=8pt
	\caption{Relative errors in Frobenius norm
		and observed time convergence orders of \eqref{eq3.5} for $N = 1024$ for Example 1.}
	\centering
	\begin{tabular}{ccccc}
		\hline
		& \multicolumn{2}{c}{$r_0 = 3$} & \multicolumn{2}{c}{$r_0 = 4$} \\
		[-2pt] \cmidrule(lr){2-3} \cmidrule(lr){4-5} \\ [-11pt]
		$M$ & $\textrm{relerr}(\tau,h)$ & $\textrm{rate}_\tau$
		& $\textrm{relerr}(\tau,h)$ & $\textrm{rate}_\tau$ \\
		\hline
16 & 3.2166E-02 & -- & 9.4197E-04 & -- \\
32 & 3.2153E-02 & 0.0006 & 4.6241E-04 & 1.0265 \\
64 & 3.2150E-02 & 0.0001 & 2.2650E-04 & 1.0296 \\
128 & 3.2150E-02 & 0.0000 & 1.0951E-04 & 1.0484 \\
256 & 3.2149E-02 & 0.0000 & 5.1281E-05 & 1.0946 \\
		\hline
	\end{tabular}
	\label{tab3}
\end{table}
\begin{table}[ht]\tabcolsep=8pt
	\caption{Relative errors in Frobenius norm
		and observed space convergence orders of \eqref{eq3.5} for $M = 2048$ for Example 1.}
	\centering
	\begin{tabular}{ccccc}
		\hline
		& \multicolumn{2}{c}{$r_0 = 3$} & \multicolumn{2}{c}{$r_0 = 4$} \\
		[-2pt] \cmidrule(lr){2-3} \cmidrule(lr){4-5} \\ [-11pt]
		$N$ & $\textrm{relerr}(\tau,h)$ & $\textrm{rate}_h$
		& $\textrm{relerr}(\tau,h)$ & $\textrm{rate}_h$ \\
		\hline
		16 & 3.8944E-02 & -- & 1.7878E-02 & -- \\
		32 & 3.3362E-02 & 0.2232 & 4.4831E-03 & 1.9956 \\
		64 & 3.2572E-02 & 0.0346 & 1.1147E-03 & 2.0079 \\
		128 & 3.2333E-02 & 0.0106 & 2.7614E-04 & 2.0131 \\
		256 & 3.2225E-02 & 0.0048 & 6.6644E-05 & 2.0509 \\
		\hline
	\end{tabular}
	\label{tab4}
\end{table}
\begin{figure}[H]
	\centering
	\subfigure
	{\includegraphics[width=3.0in,height=2.4in]{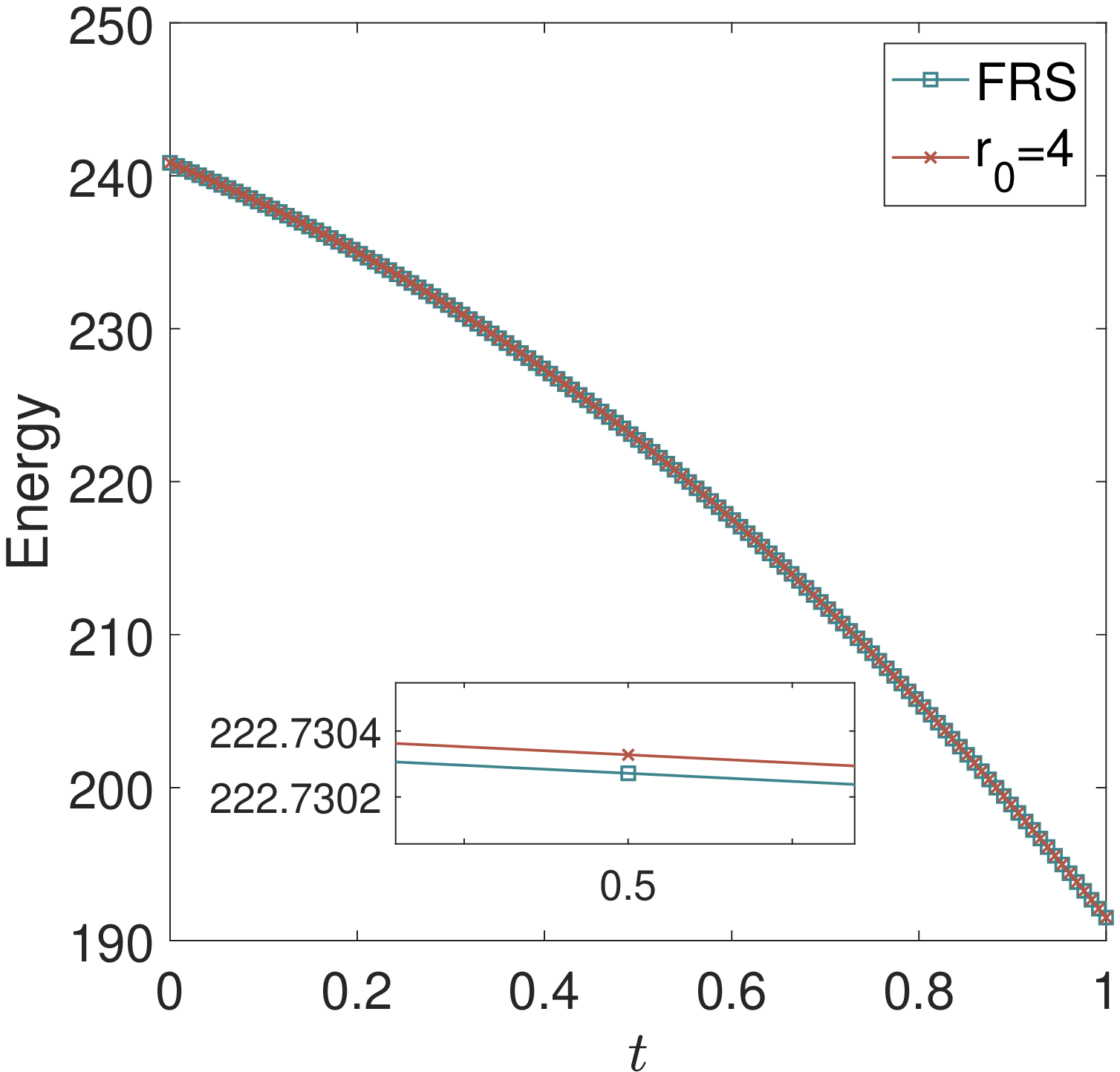}} \hspace{2mm}
	\subfigure
	{\includegraphics[width=3.0in,height=2.4in]{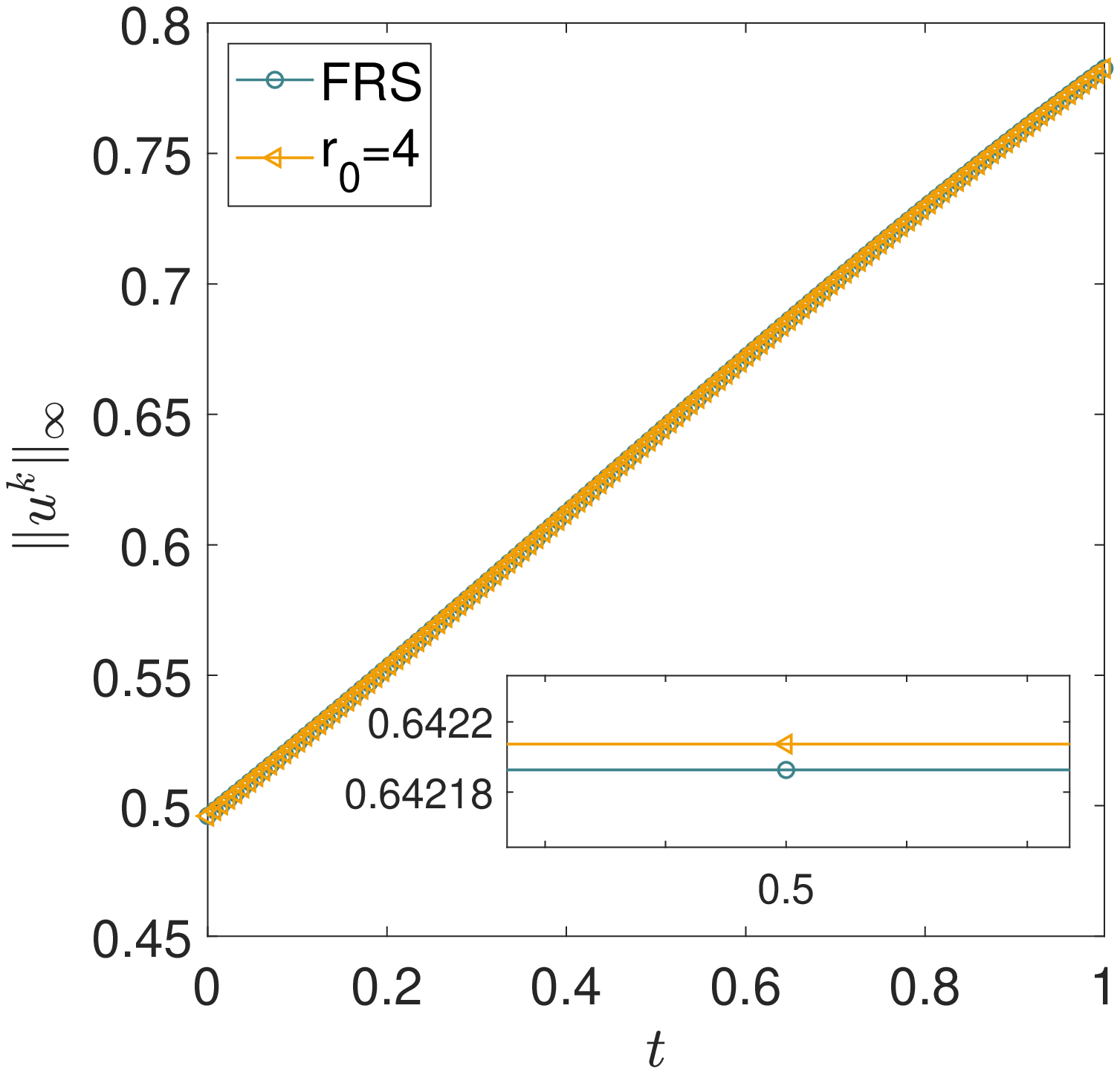}}
	\caption{The maximum norm (left) and the energy (right) of the numerical solution
		computed by \eqref{eq2.5} and \eqref{eq3.5} for Example 1 with $(M,N) = (128,1024)$.}
	\label{fig1}
\end{figure}
\begin{figure}[ht]
	\centering
	{\includegraphics[width=3.0in,height=2.4in]{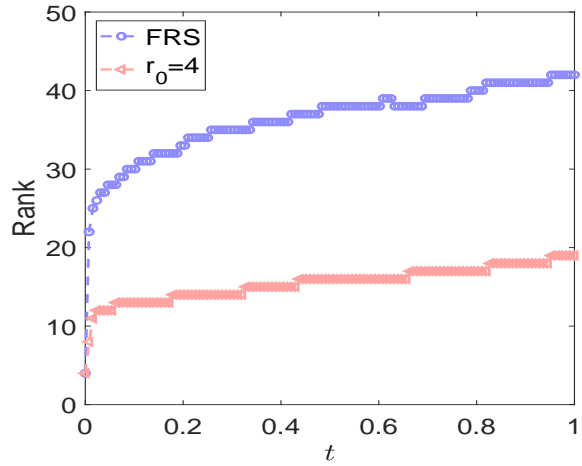}}
	\caption{The rank comparison of the numerical solution
		computed by \eqref{eq2.5} and \eqref{eq3.5} for Example 1 with $(M,N) = (128,1024)$.}
	\label{fig2}
\end{figure}

\noindent\textbf{Example 2.}~(Numerical simulation) In this example,
we consider Eq.~\eqref{eq1.1} with $\kappa = 10^{-4}$ and the following three different initial shapes \cite{kim2022learning}:

(1) Star: $u(x,y,0) = \tanh \left( \frac{0.25 + 0.1 \cos(6 \theta) -
			\sqrt{(x - 0.5)^2 + (y - 0.5^2)}}{\varepsilon \sqrt{2}} \right)$,
where $(x,y) \in [0,1]^2$ and
\begin{equation*}
\theta =
\begin{cases}
\tan^{-1} \left( \frac{y - 0.5}{x - 0.5} \right), & x > 0.5, \\
\pi + \tan^{-1} \left( \frac{y - 0.5}{x - 0.5} \right), & \mathrm{others}.
\end{cases}	
\end{equation*}
\begin{figure}[p]
	\setlength{\tabcolsep}{0.2pt}
	\centering
	\begin{tabular}{m{0.4cm}<{\centering} m{4cm}<{\centering} m{4cm}<{\centering} m{4cm}<{\centering} m{4cm}<{\centering}}
		& $t = 0$ & $t = 0.0016$ & $t = 0.005$ & $t = 0.01$ \\
		\rotatebox{90}{FRS} &
		\includegraphics[width=1.6in,height=1.2in]{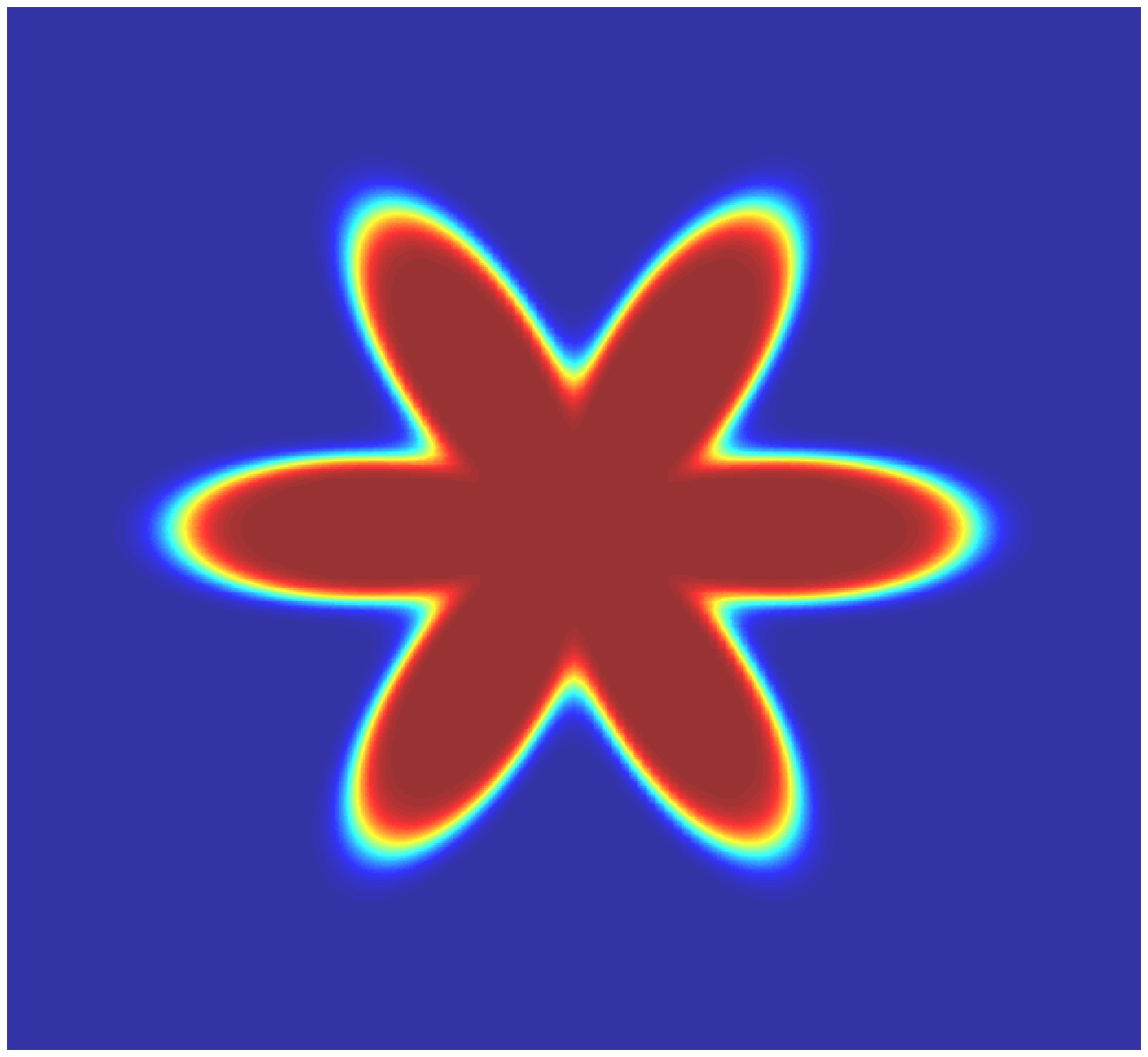} &
		\includegraphics[width=1.6in,height=1.2in]{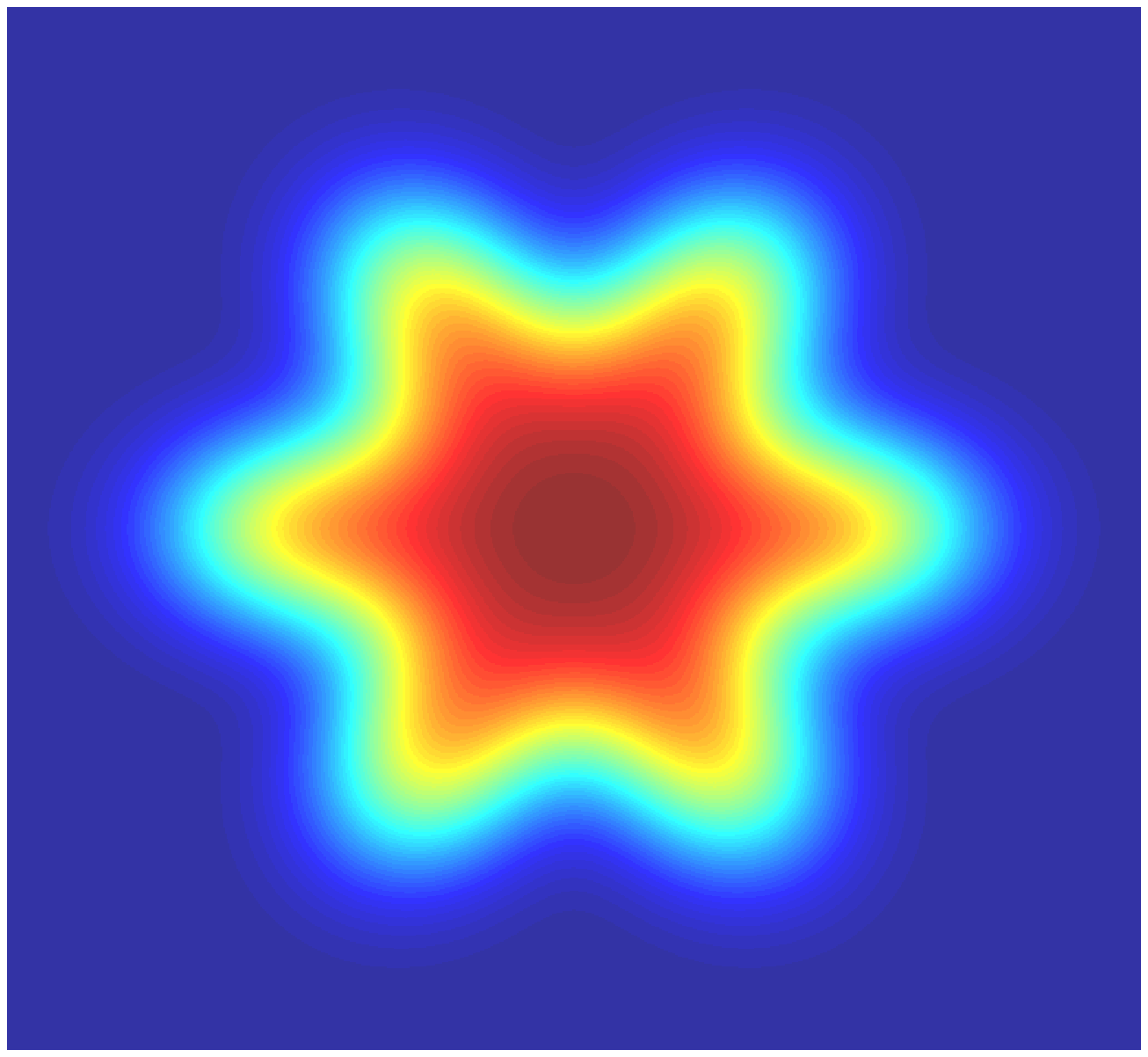} &
		\includegraphics[width=1.6in,height=1.2in]{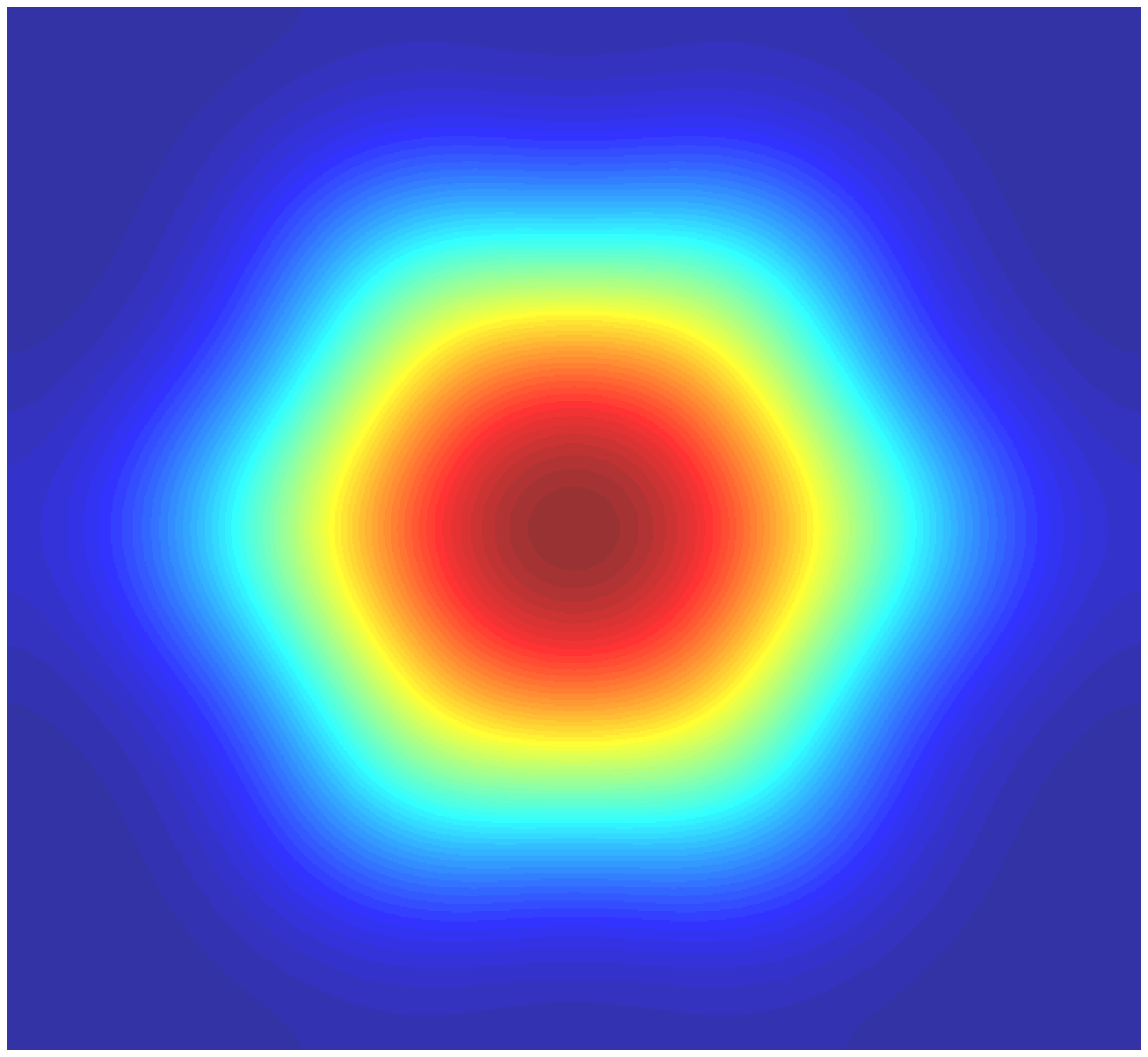} &
		\includegraphics[width=1.6in,height=1.2in]{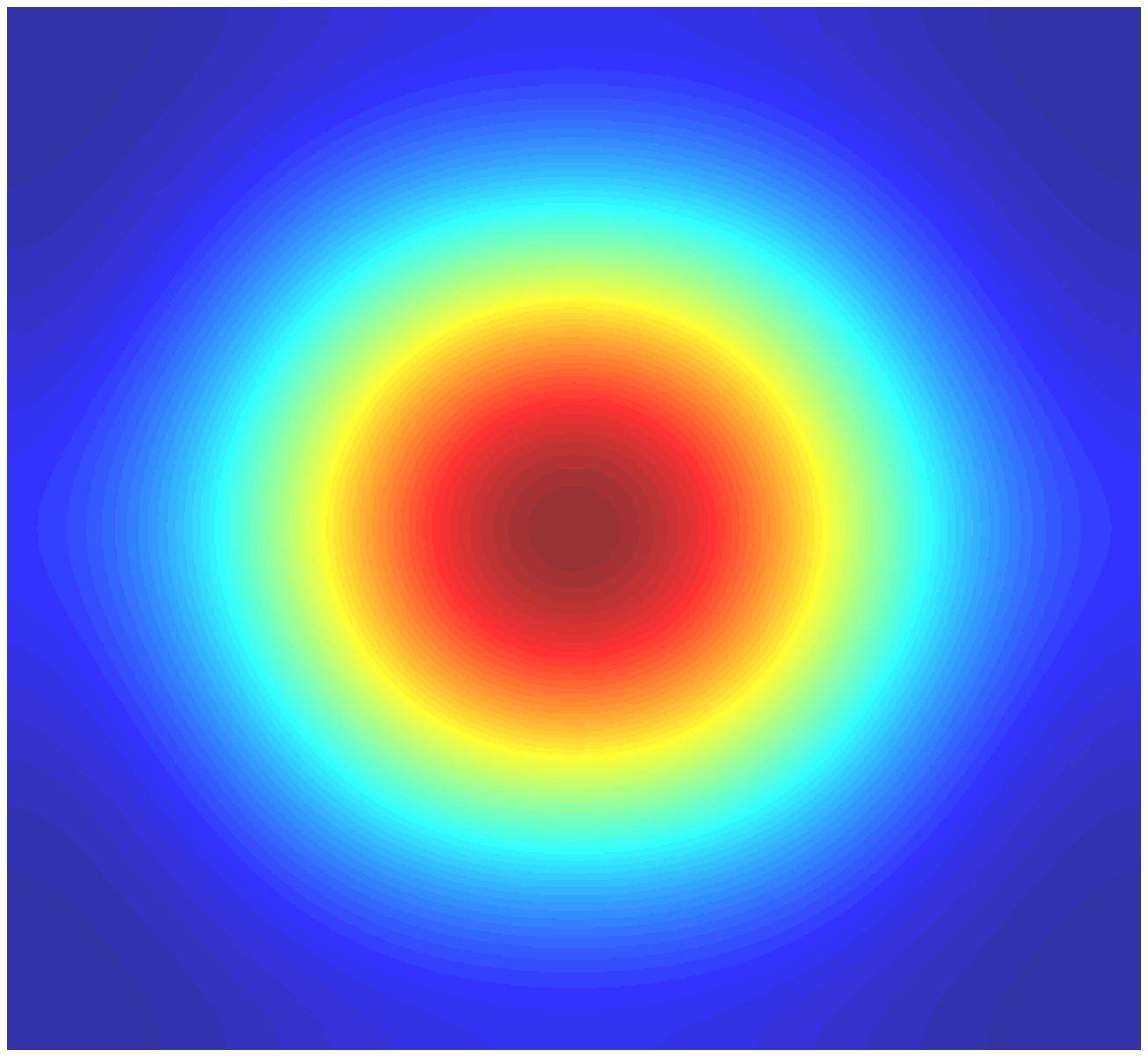} \\
		\rotatebox{90}{$r_0 = 16$} &
		\includegraphics[width=1.6in,height=1.2in]{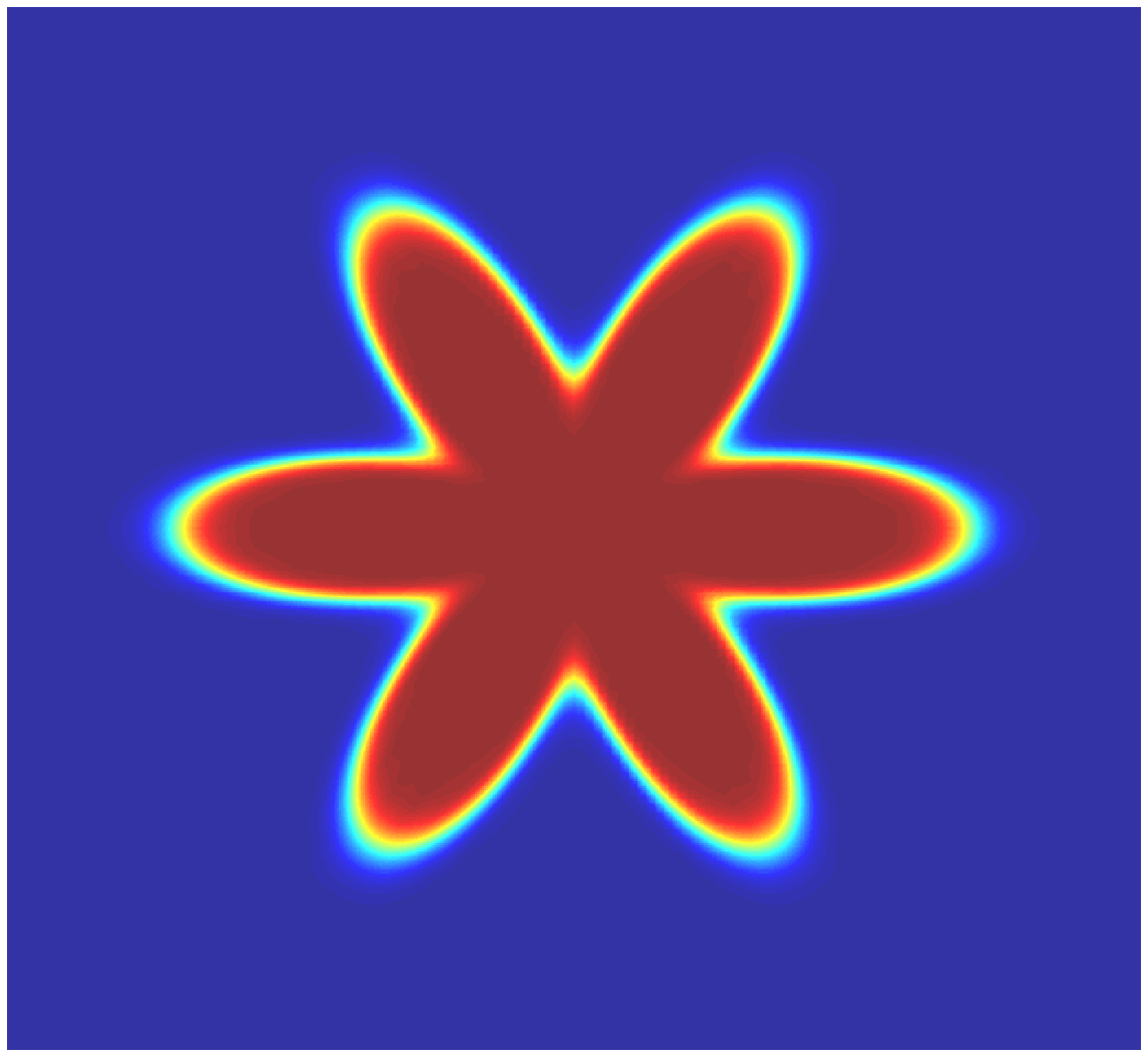} &
		\includegraphics[width=1.6in,height=1.2in]{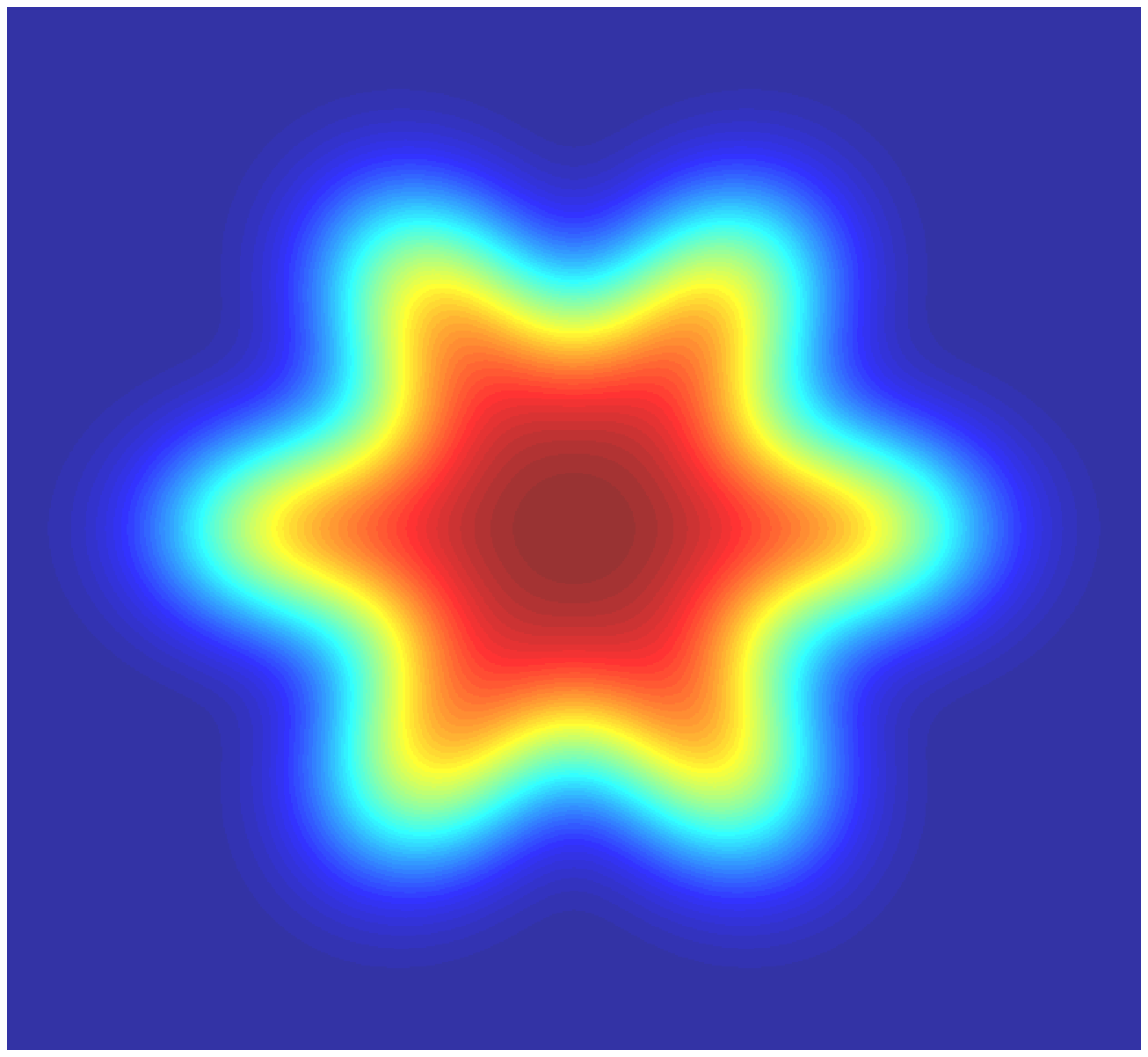} &
		\includegraphics[width=1.6in,height=1.2in]{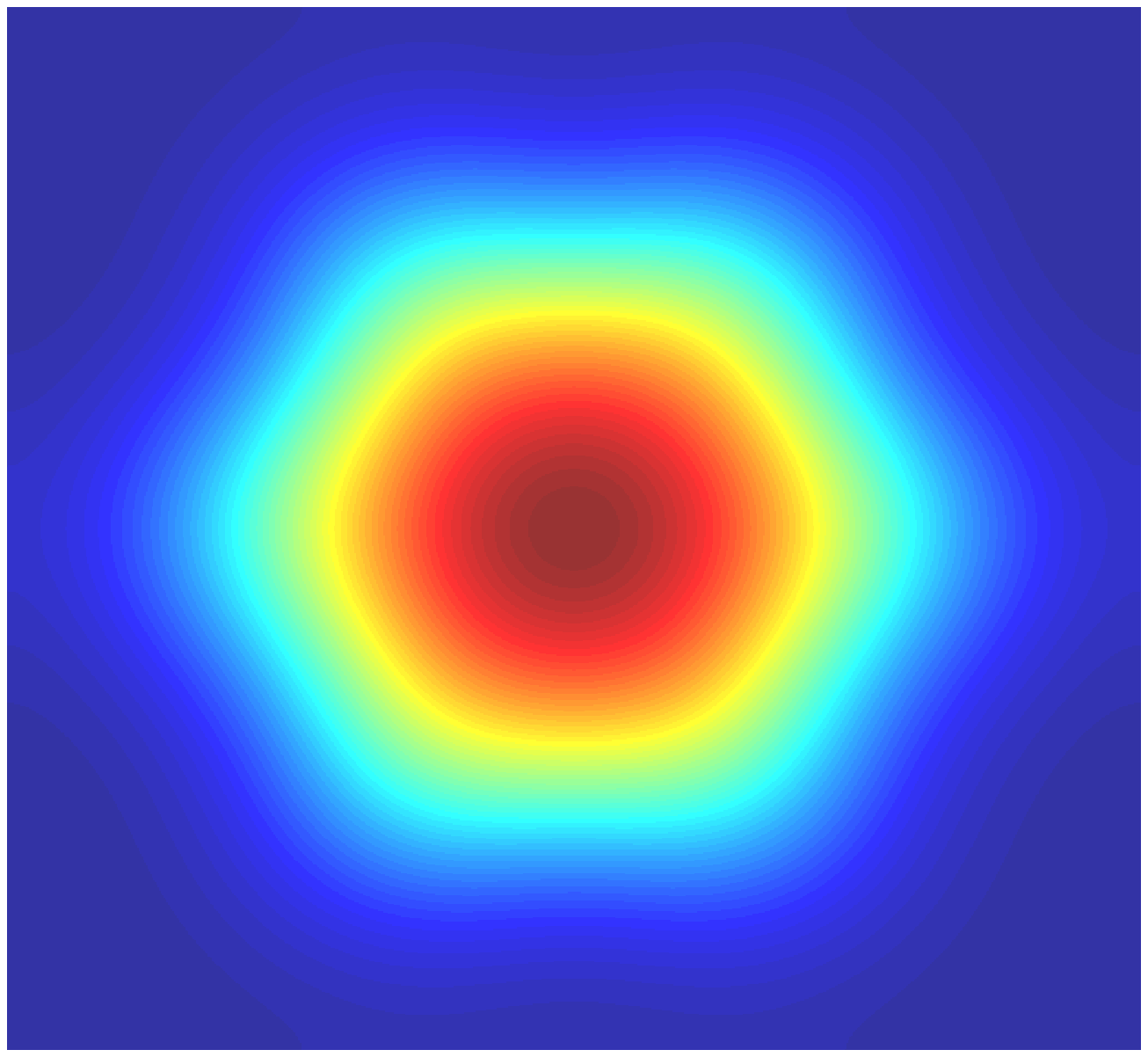} &
		\includegraphics[width=1.6in,height=1.2in]{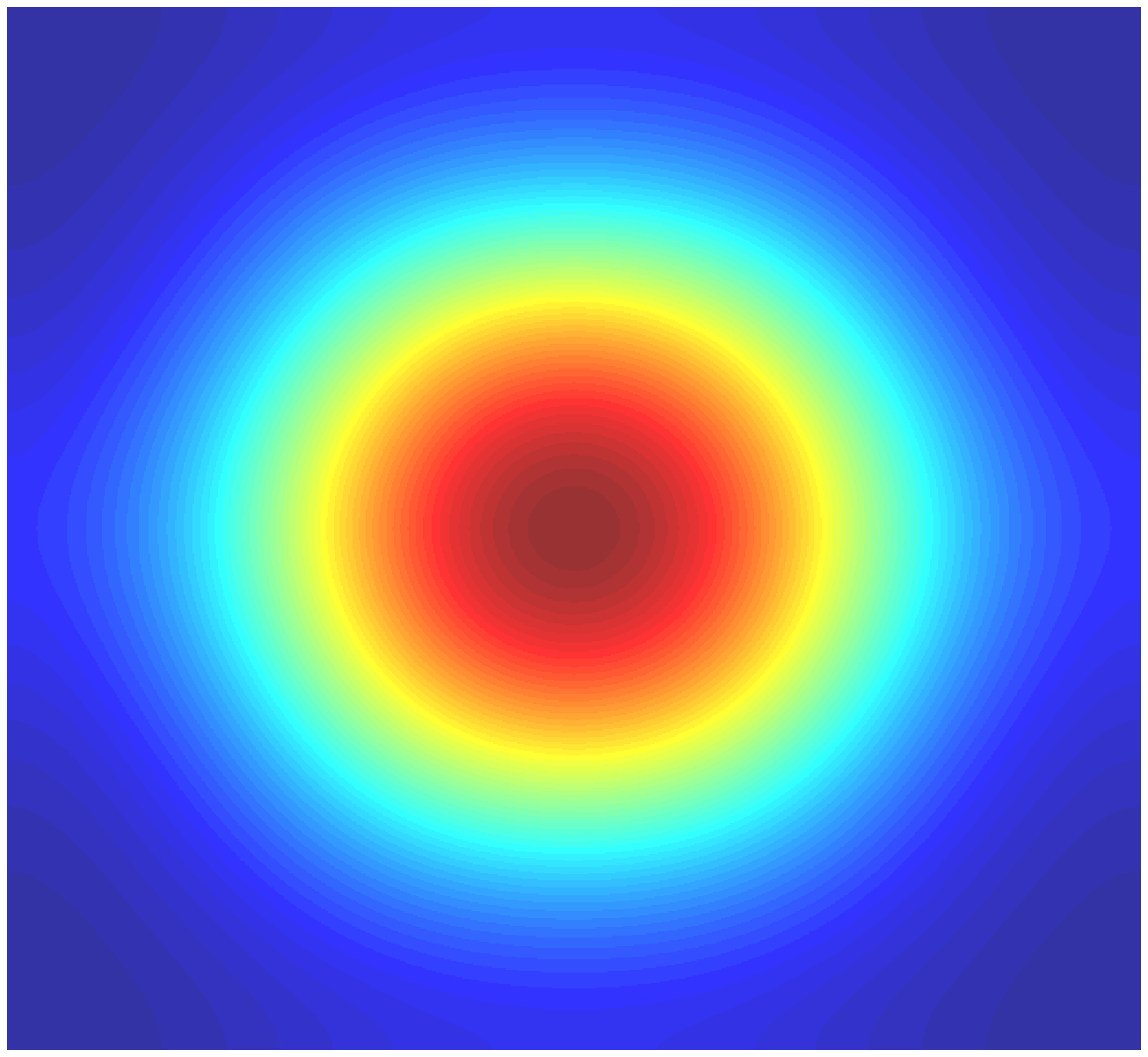} \\
		& $t = 0$ & $t = 0.005$ & $t = 0.01$ & $t = 0.02$ \\
		\rotatebox{90}{FRS} &
		\includegraphics[width=1.6in,height=1.2in]{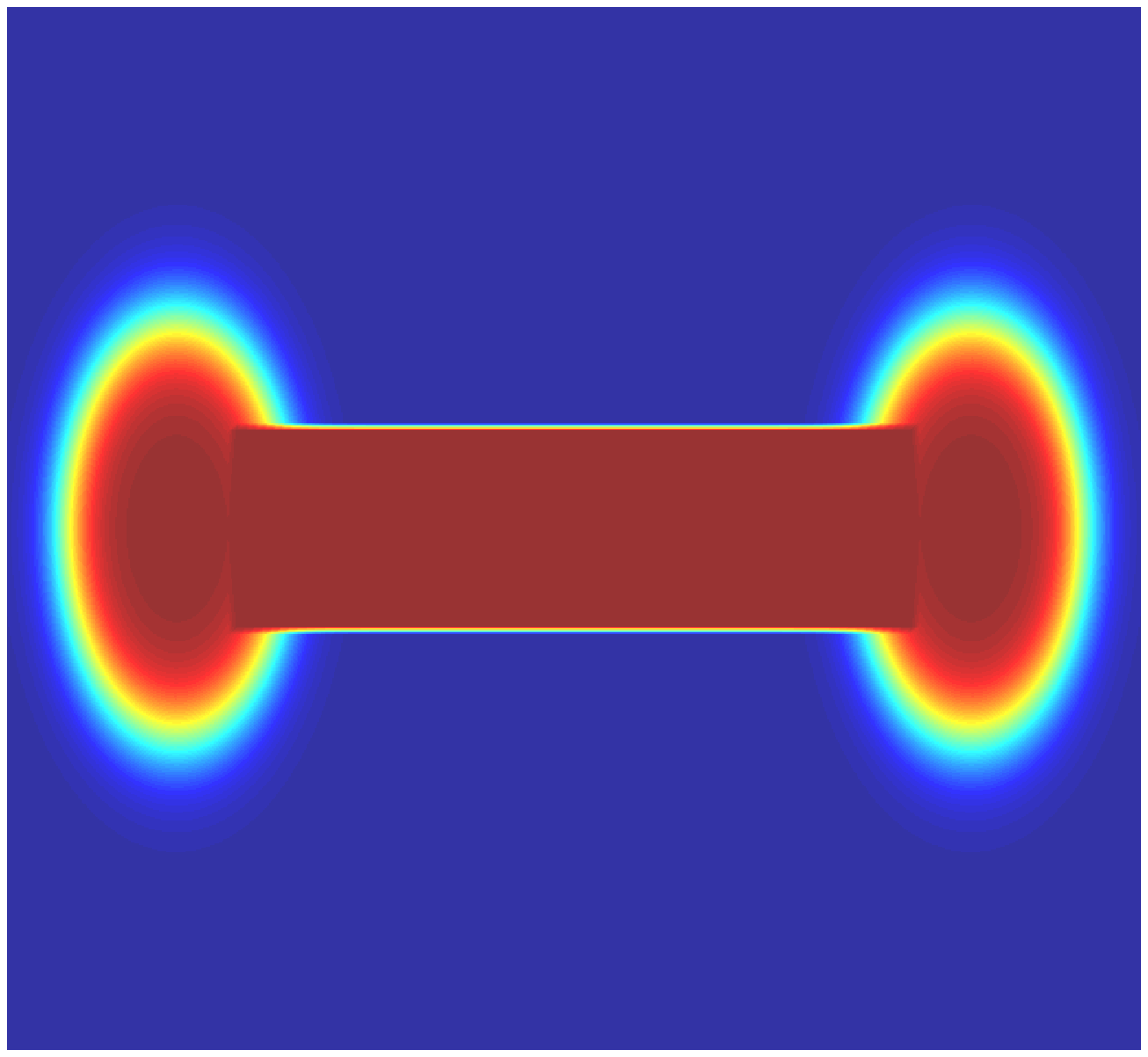} &
		\includegraphics[width=1.6in,height=1.2in]{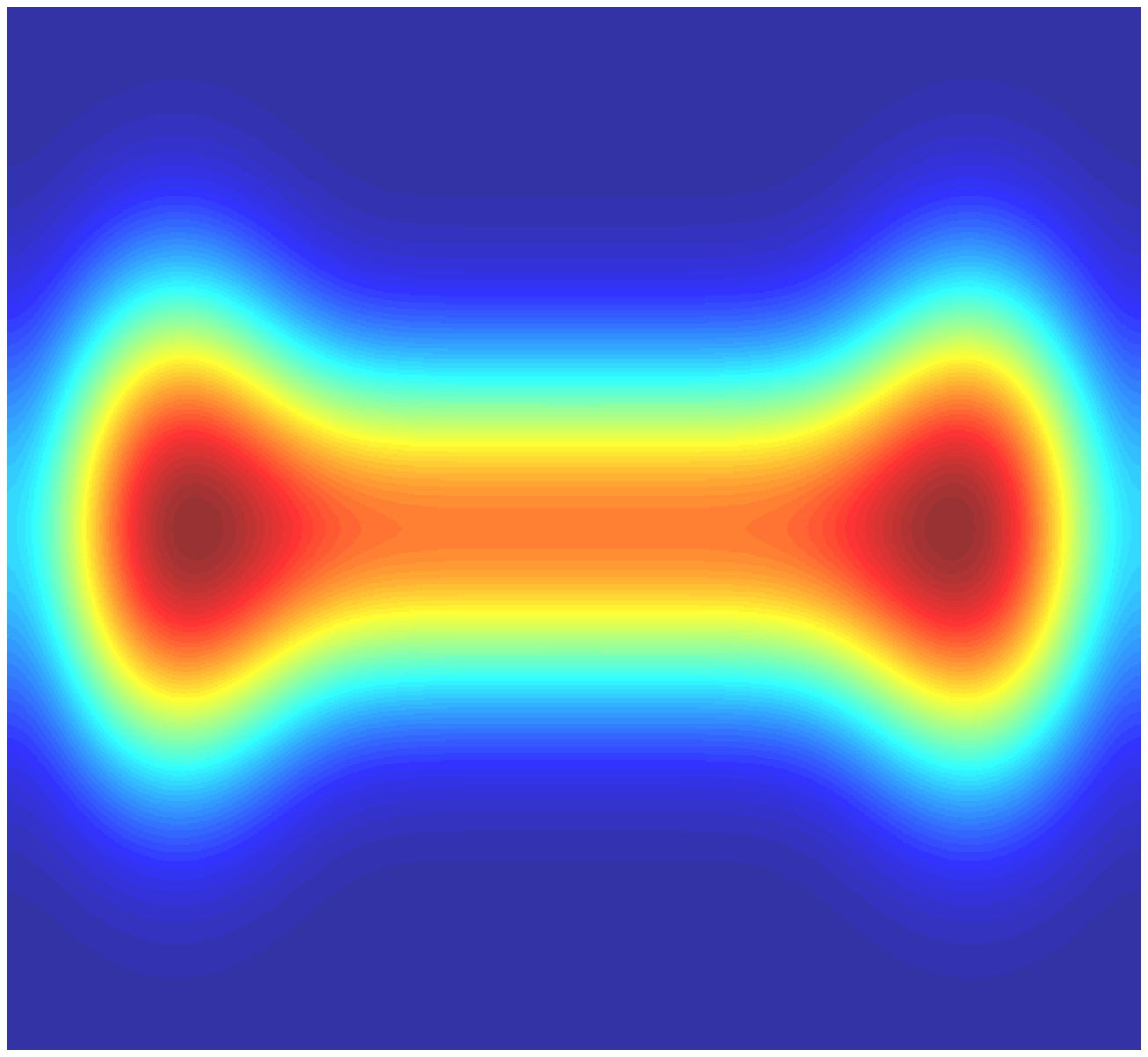} &
		\includegraphics[width=1.6in,height=1.2in]{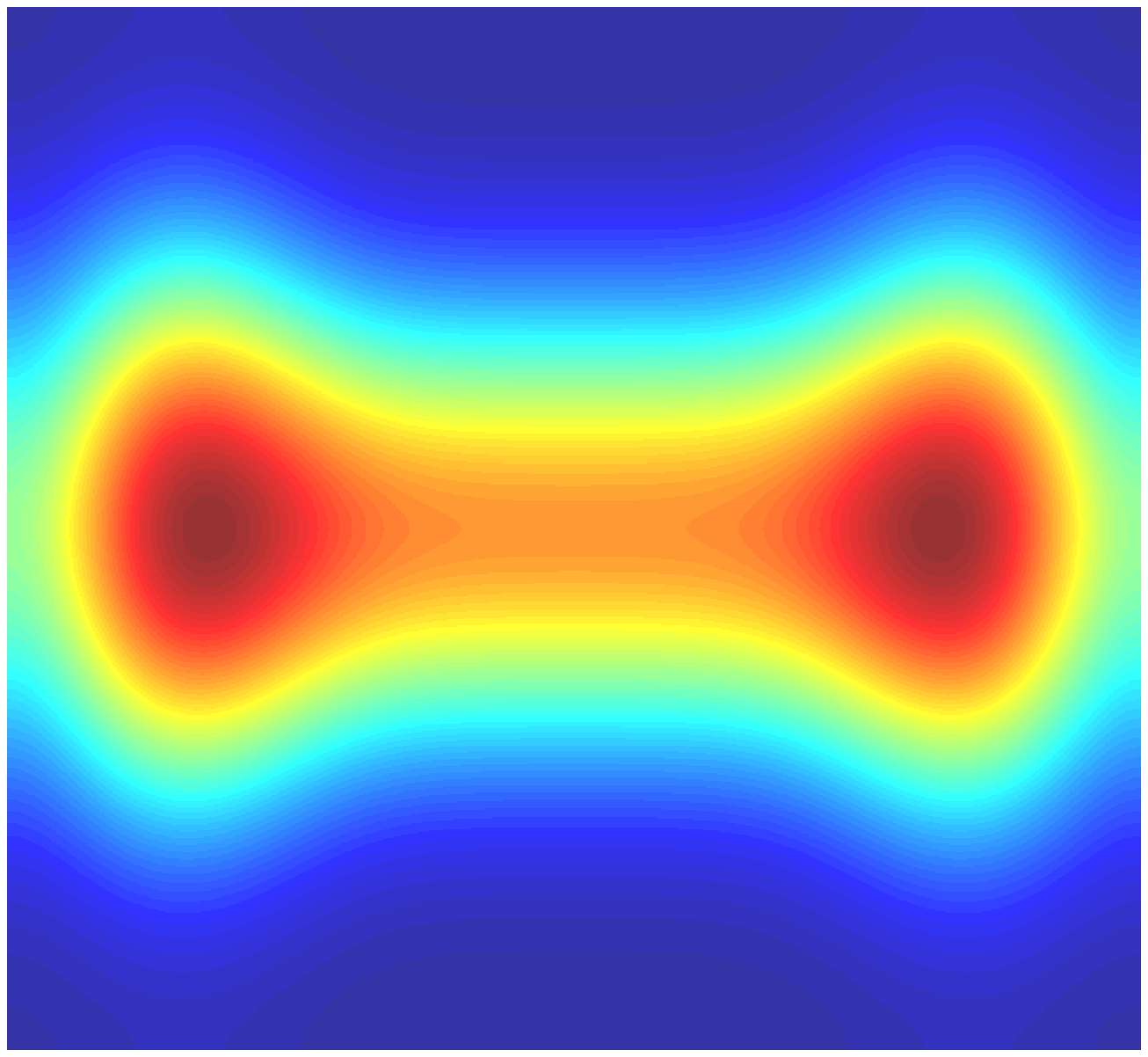} &
		\includegraphics[width=1.6in,height=1.2in]{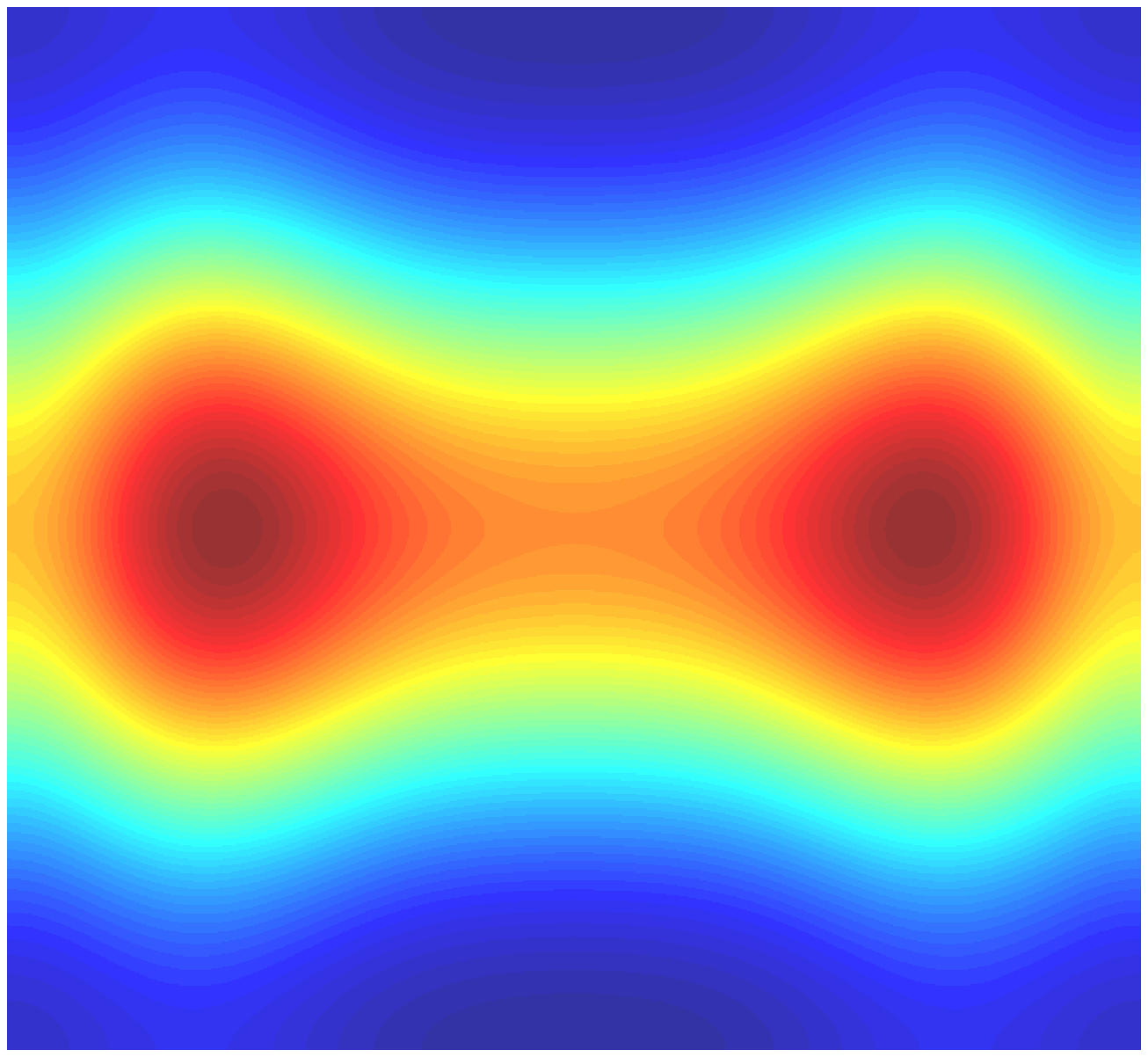} \\
		\rotatebox{90}{$r_0 = 6$} &
		\includegraphics[width=1.6in,height=1.2in]{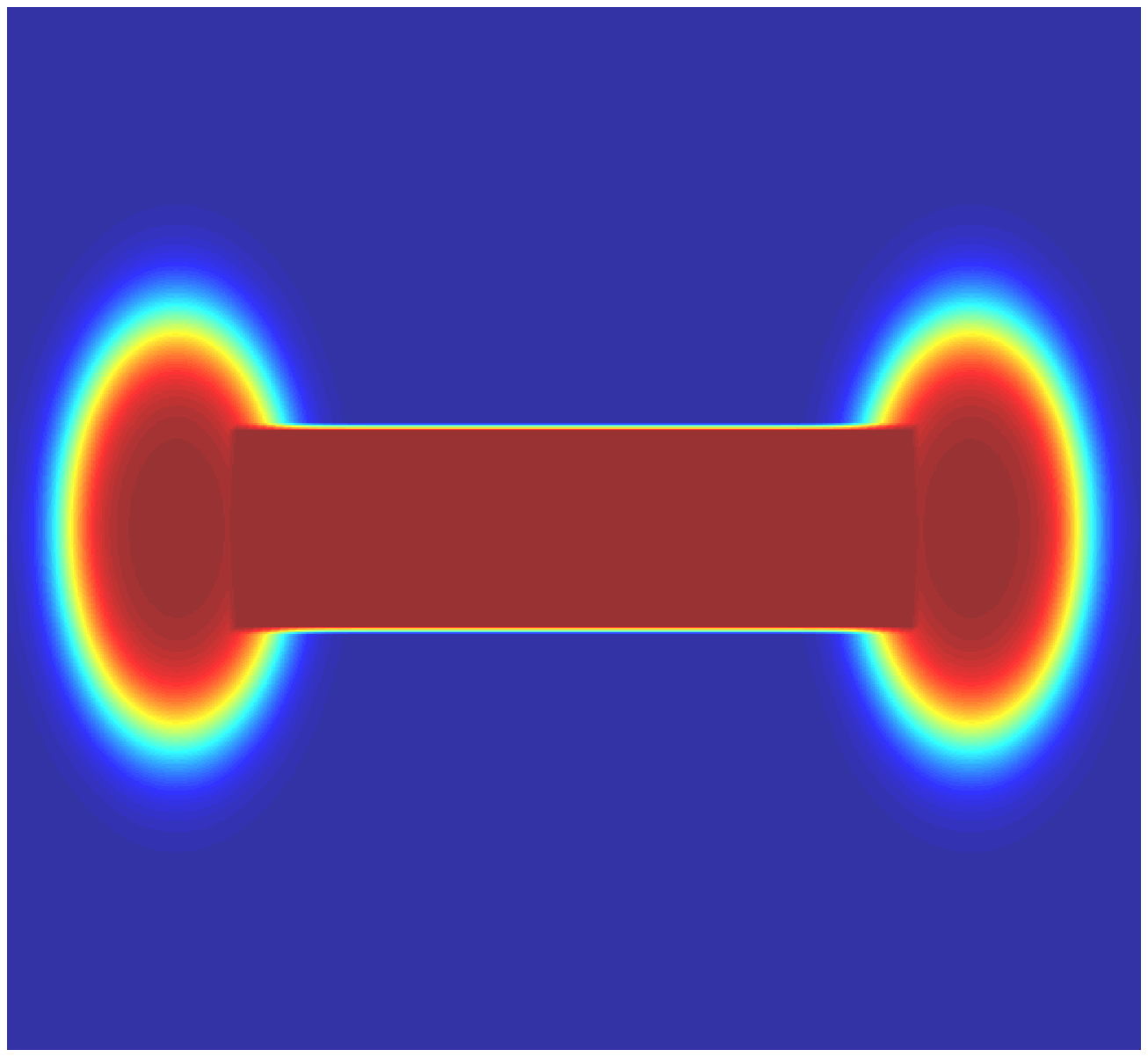} &
		\includegraphics[width=1.6in,height=1.2in]{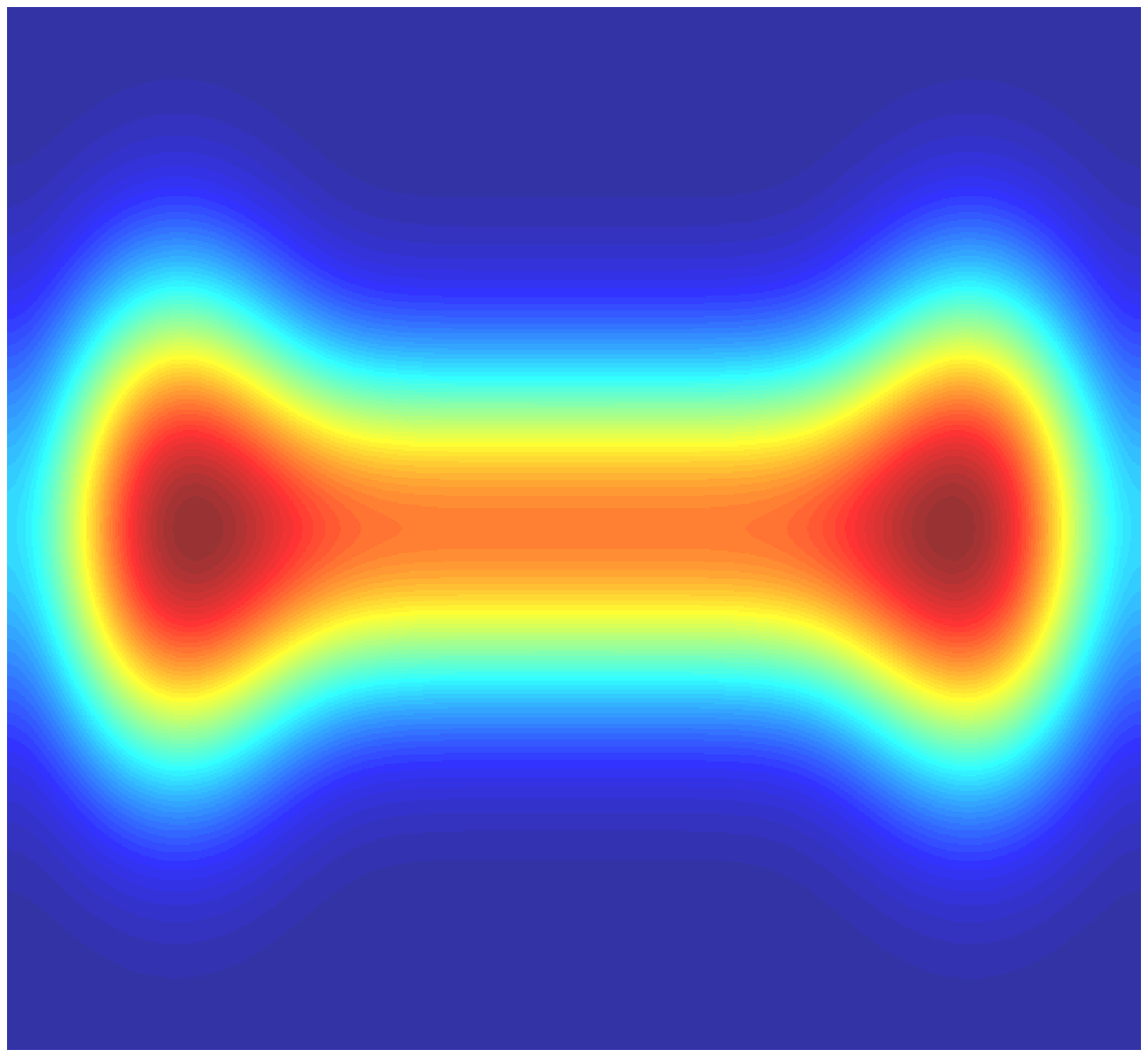} &
		\includegraphics[width=1.6in,height=1.2in]{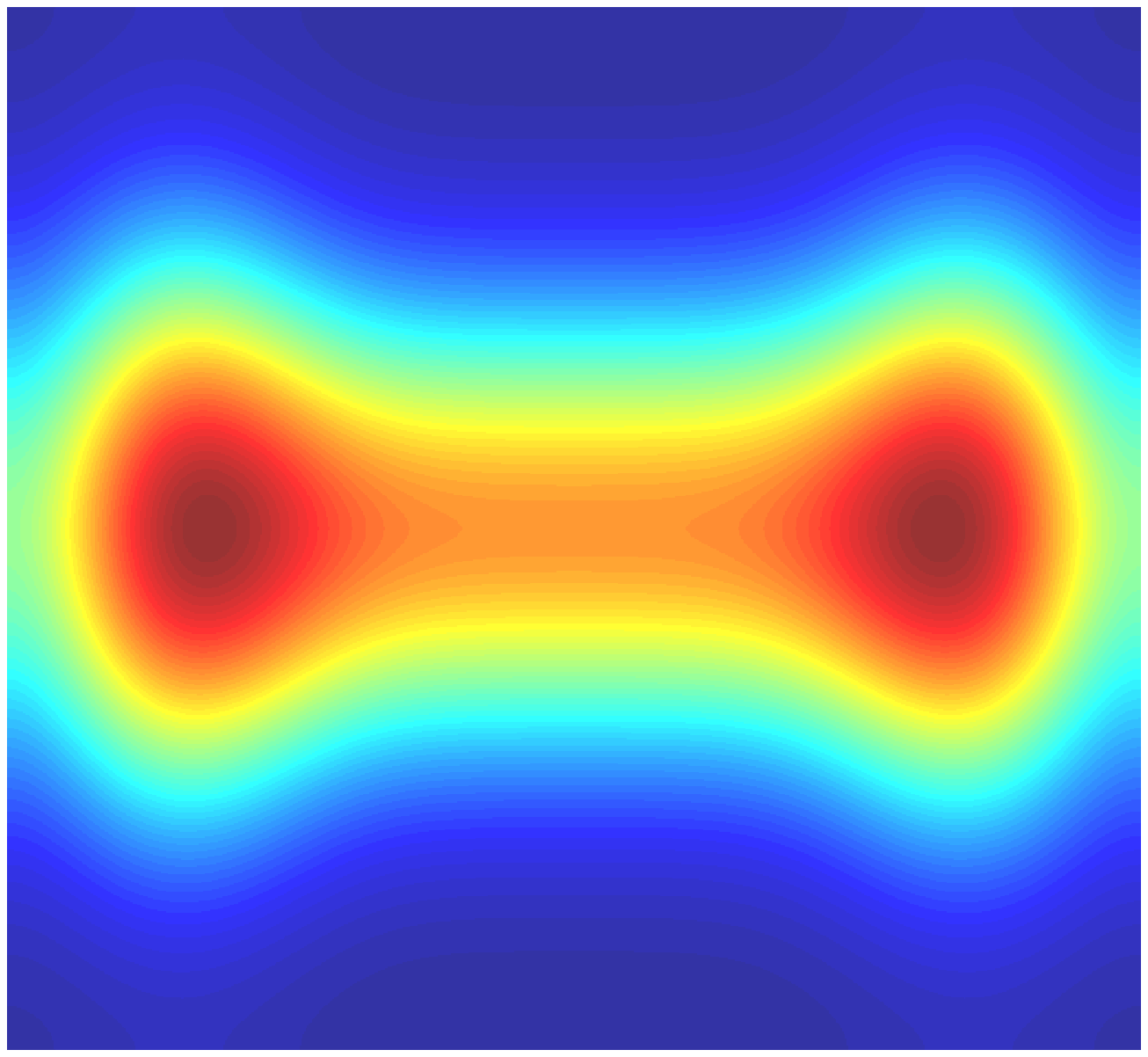} &
		\includegraphics[width=1.6in,height=1.2in]{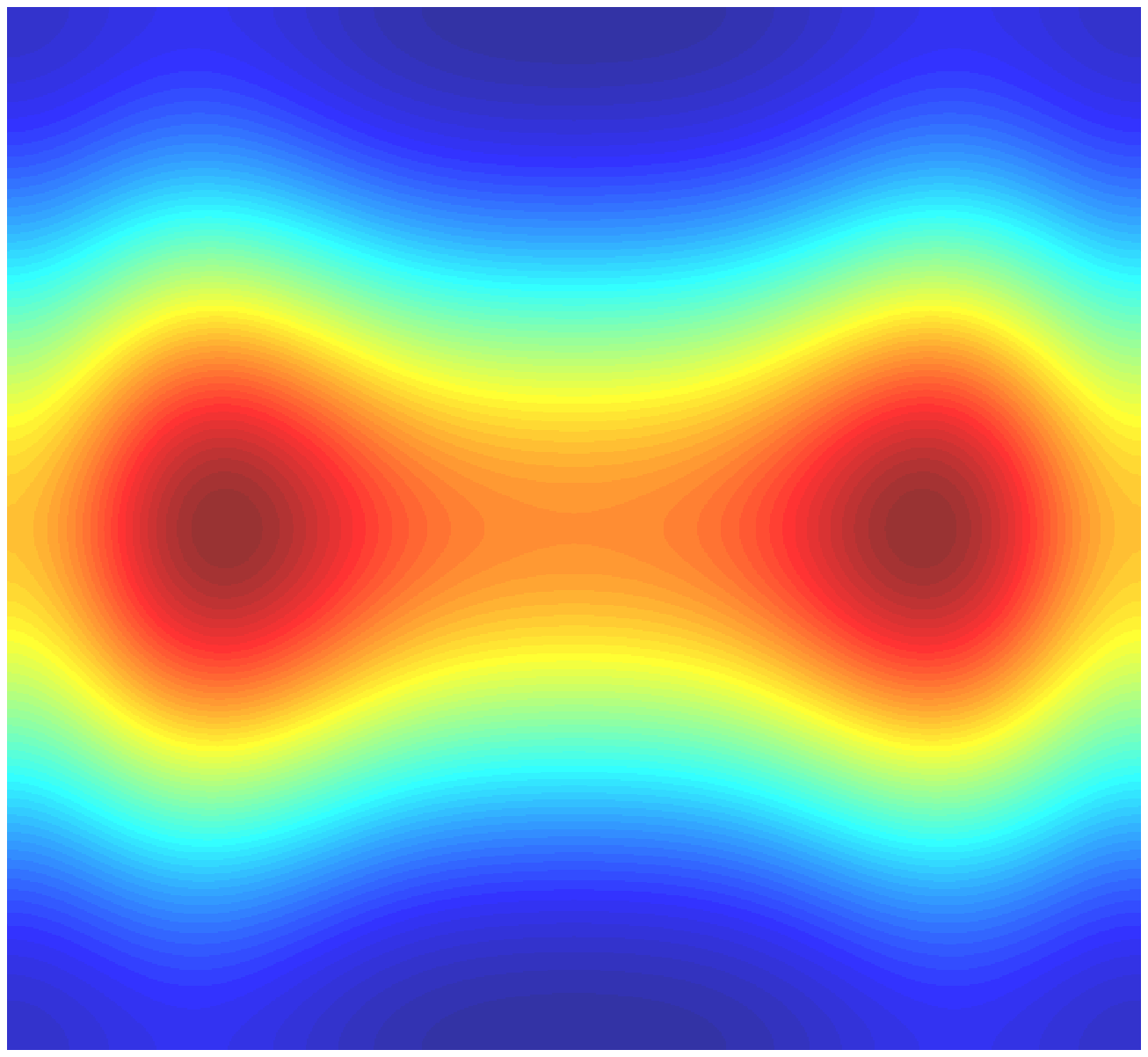} \\
		& $t = 0$ & $t = 0.005$ & $t = 0.01$ & $t = 0.04$ \\
		\rotatebox{90}{FRS} &
		\includegraphics[width=1.6in,height=1.2in]{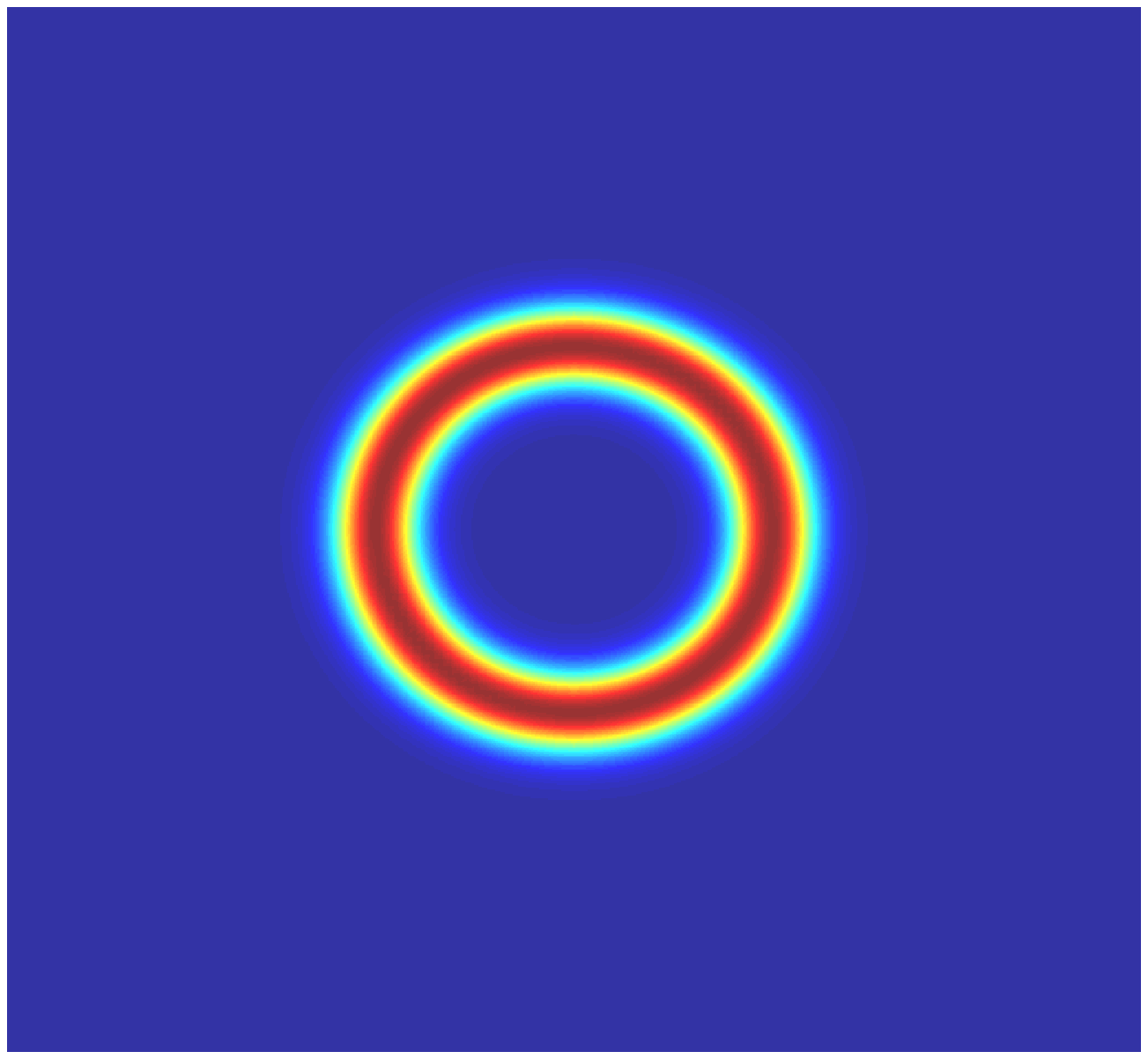} &
		\includegraphics[width=1.6in,height=1.2in]{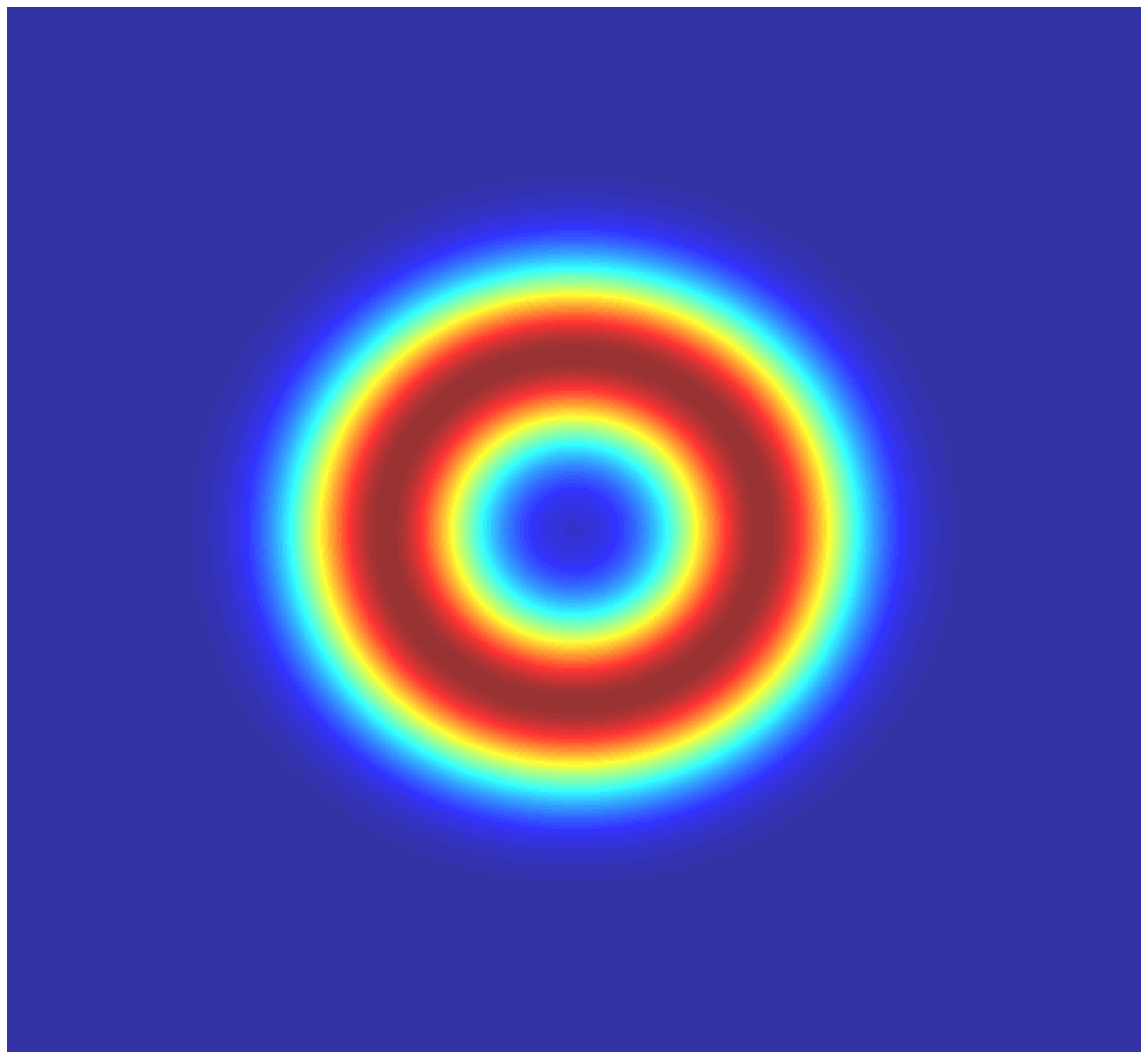} &
		\includegraphics[width=1.6in,height=1.2in]{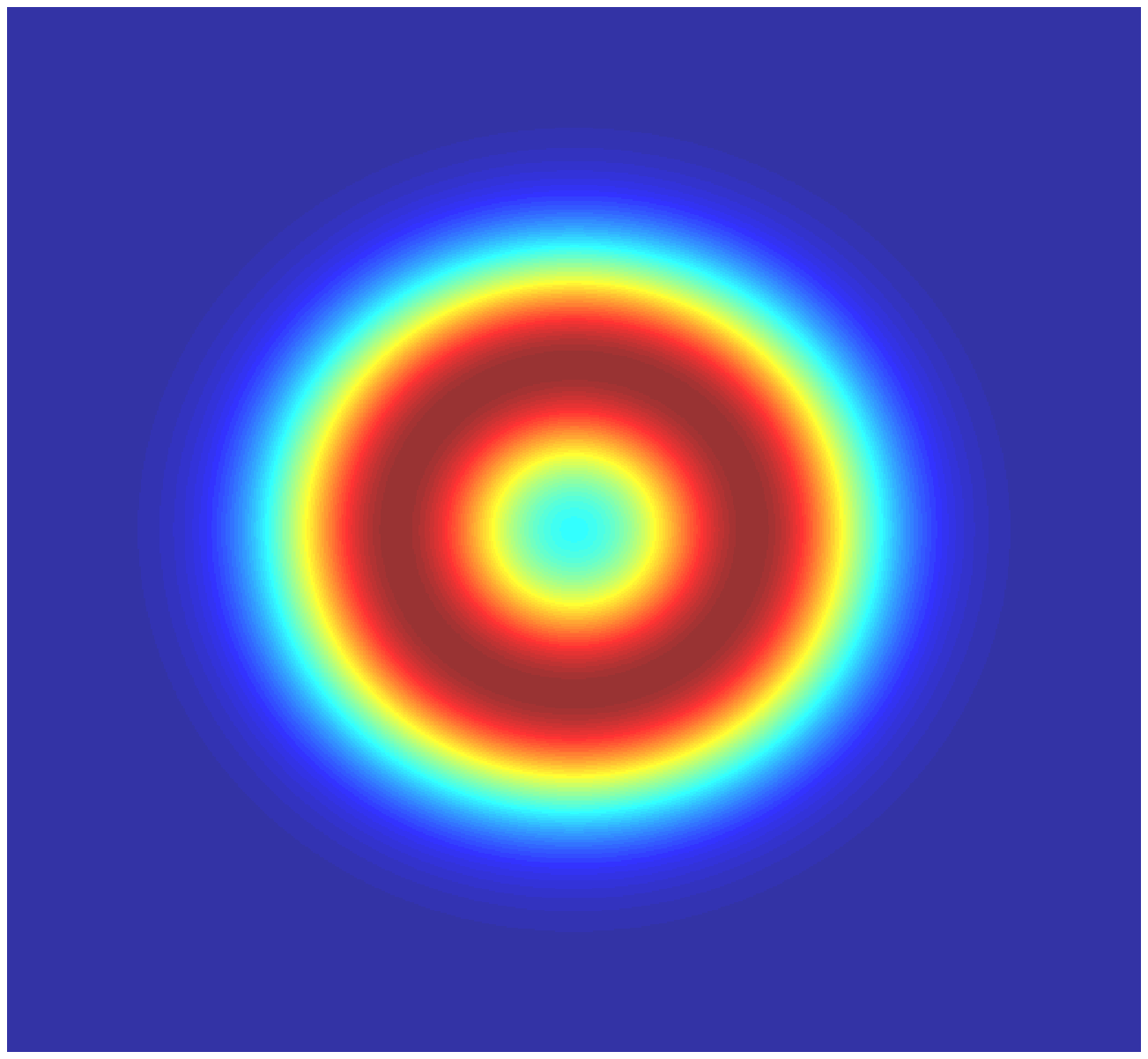} &
		\includegraphics[width=1.6in,height=1.2in]{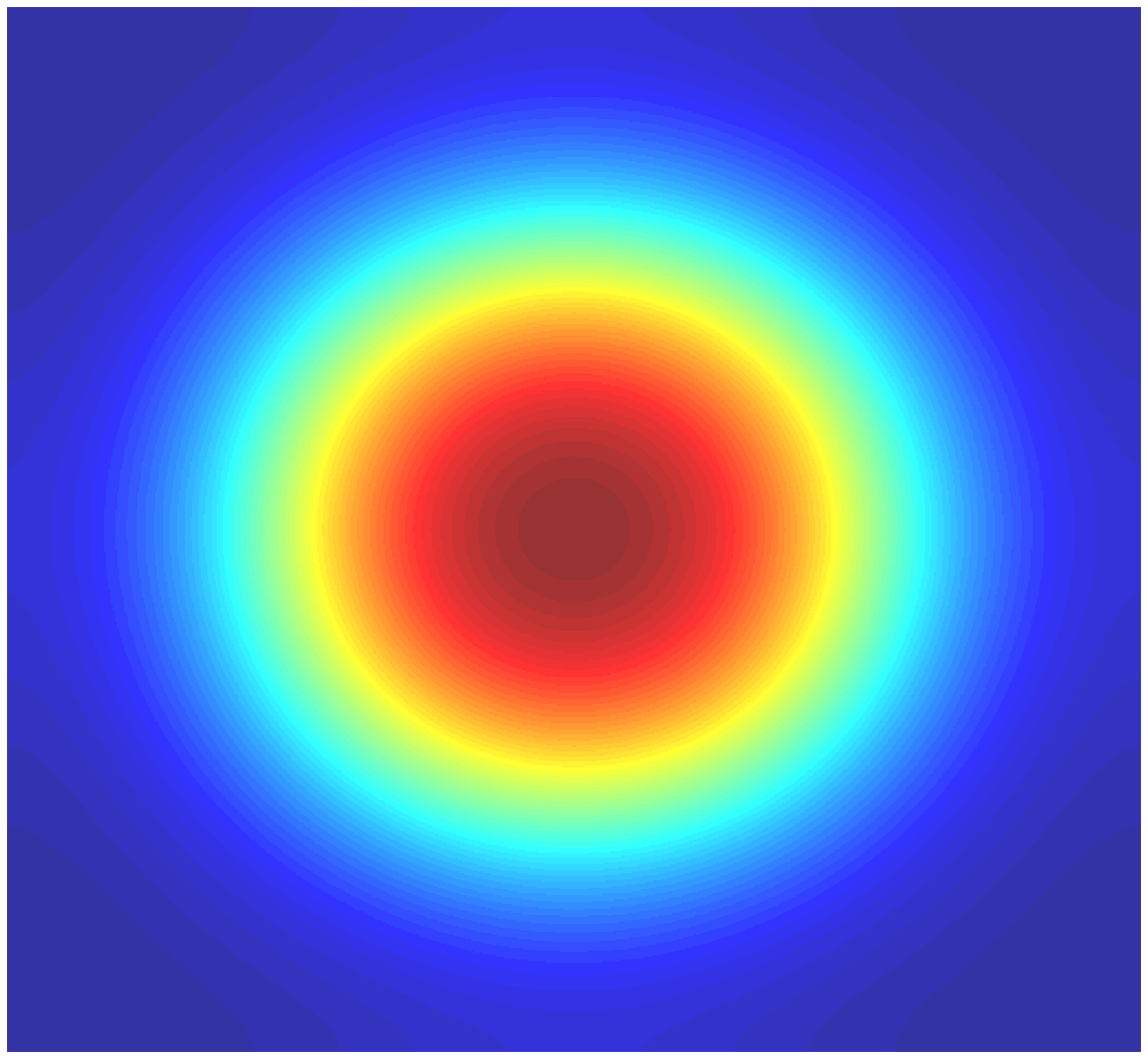} \\
		\rotatebox{90}{$r_0 = 10$} &
		\includegraphics[width=1.6in,height=1.2in]{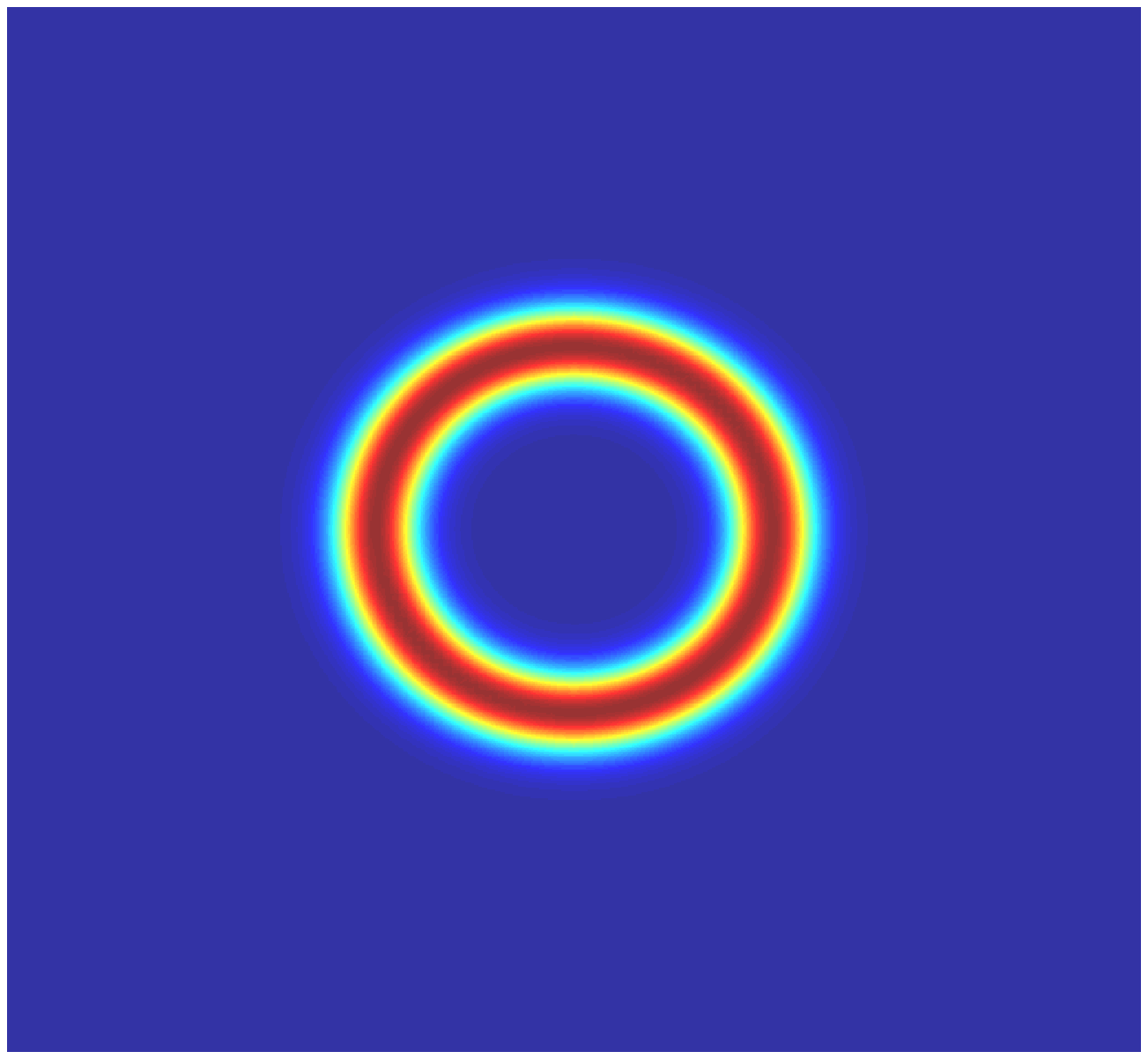} &
		\includegraphics[width=1.6in,height=1.2in]{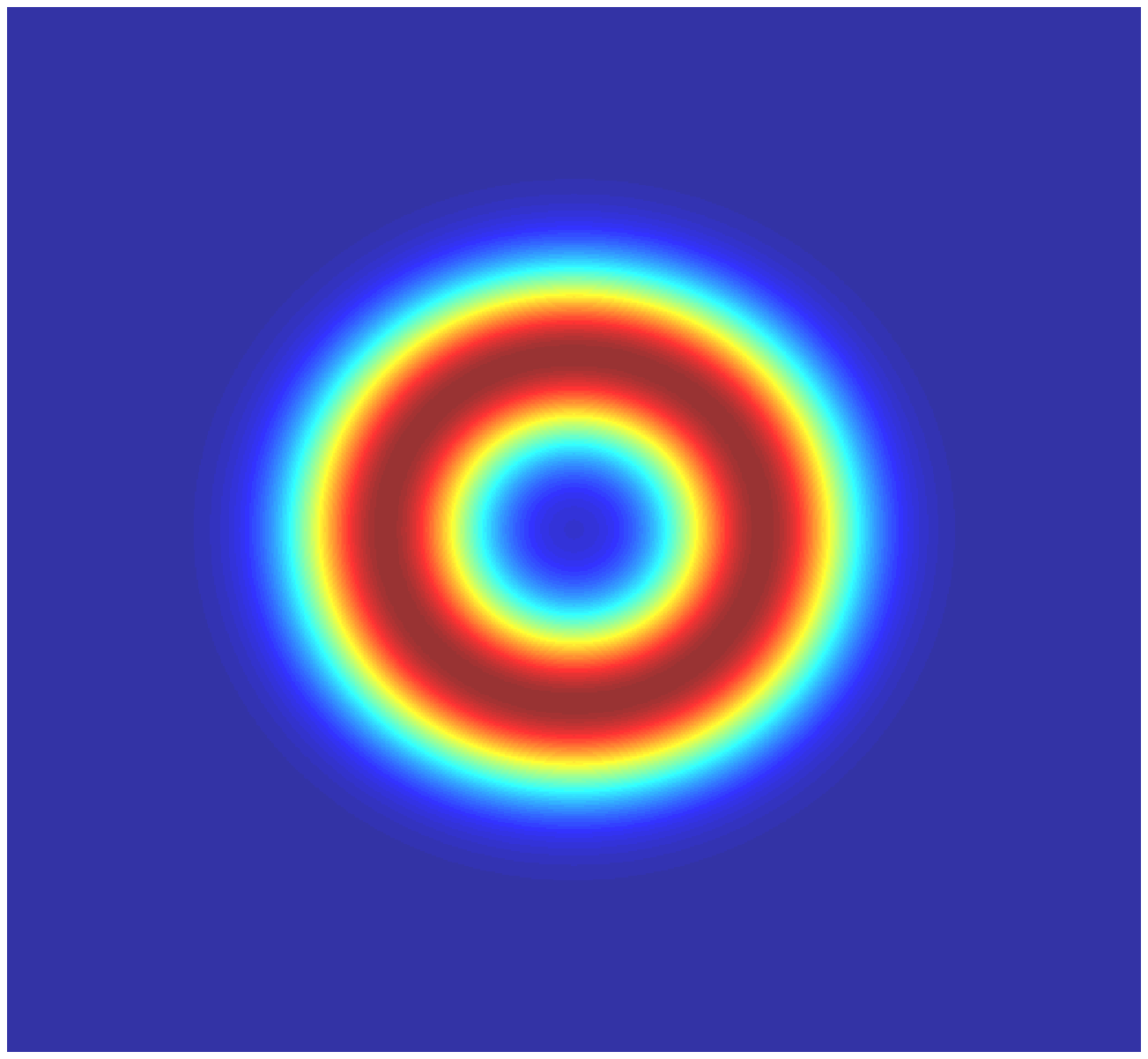} &
		\includegraphics[width=1.6in,height=1.2in]{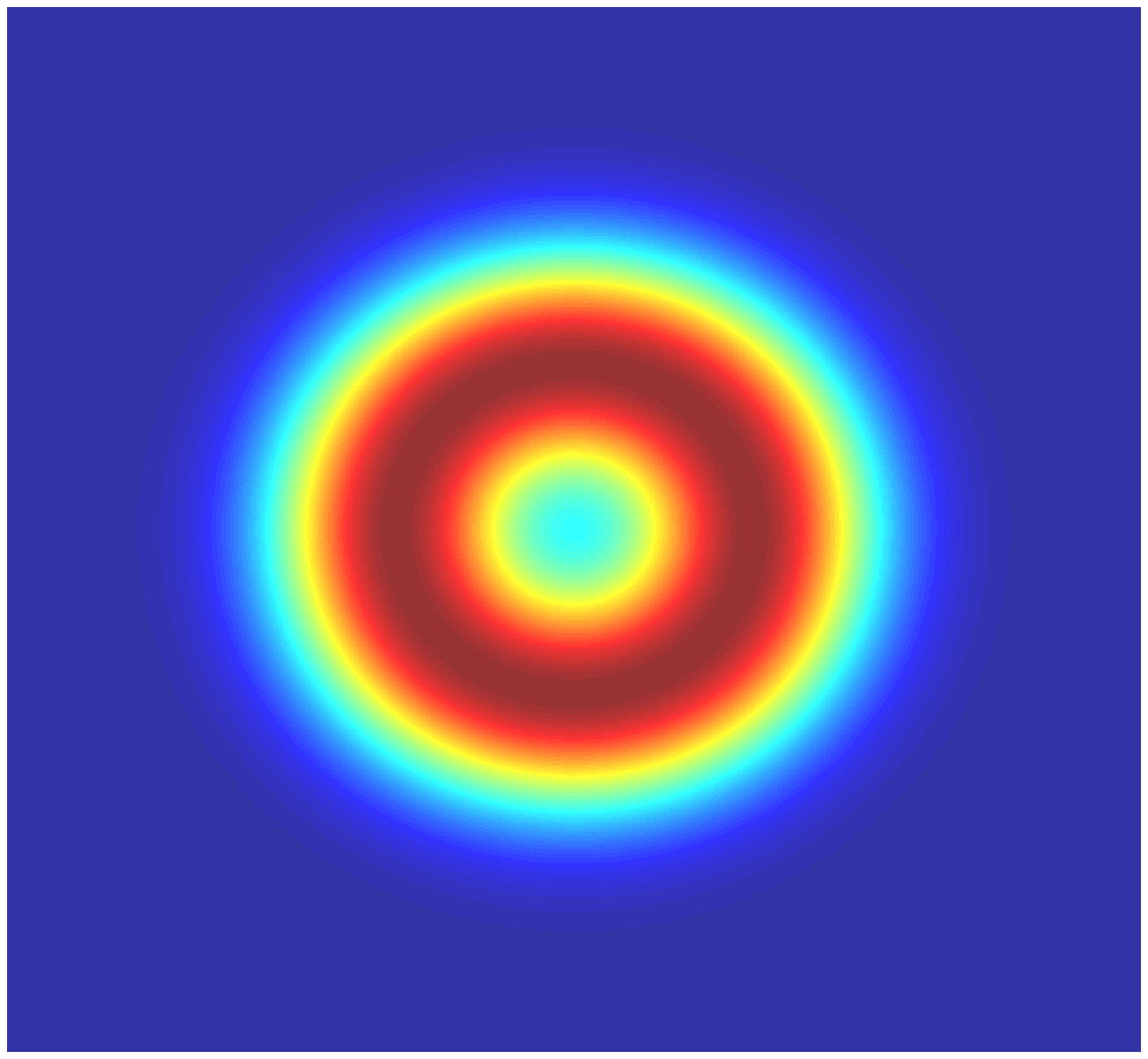} &
		\includegraphics[width=1.6in,height=1.2in]{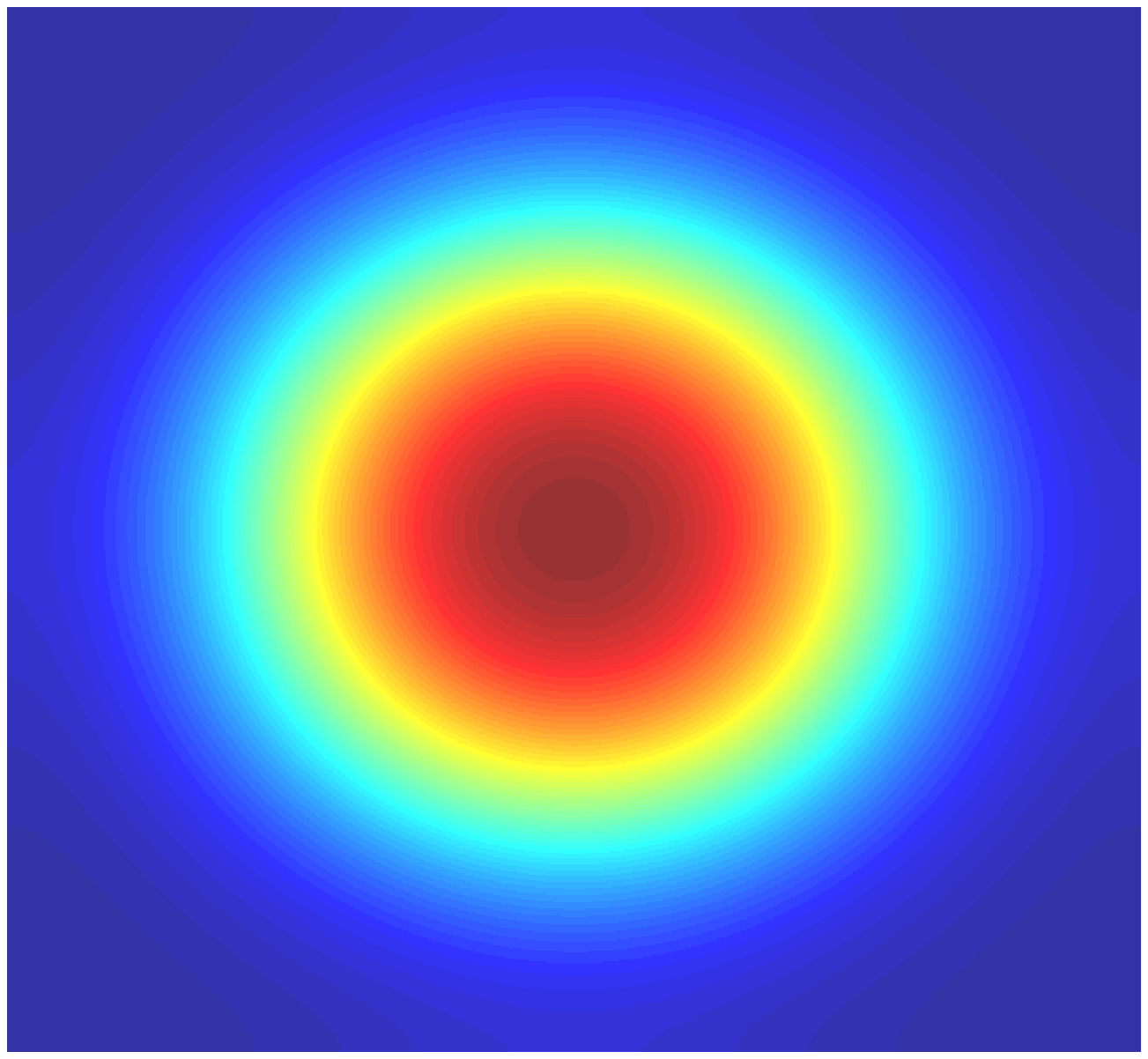} \\
	\end{tabular}
	\caption{Comparisons of the FRS and ALRS solutions for Example 2 with $M = N = 128$.
		First two rows: star. Middle two rows: dumbbell. Bottom two rows: torus.}
	\label{fig3}
\end{figure}

(2) Dumbbell:
\begin{equation*}
u(x,y,0) =
\begin{cases}
1, & 0.4 < x < 1.6, 0.4 < y < 0.6, \\
\begin{split}
1 + & \tanh \left( \frac{0.2 - \sqrt{(x - 0.3)^2 + (y - 0.5)^2}}{\varepsilon \sqrt{2}} \right) + \\
& \tanh \left( \frac{0.2 - \sqrt{(x - 1.7)^2 + (y - 0.5)^2}}{\varepsilon \sqrt{2}} \right), \end{split}
& \mathrm{others},
\end{cases}
\end{equation*}
where $(x,y) \in [0,2] \times [0,1]$.

(3) Torus: $u(x,y,0) = -1 +
	\tanh \left( \frac{0.4 - \sqrt{x^2 + y^2}}{\varepsilon \sqrt{2}} \right) -
	\tanh \left( \frac{0.3 - \sqrt{x^2 + y^2}}{\varepsilon \sqrt{2}} \right)$,
where $(x,y) \in [-1,1]^2$.

\noindent Moreover, in these three cases, we set $\varepsilon = \frac{5 h_x}{\sqrt{2} \tanh^{-1}(0.9)}$.

The temporal evolution of the three initial shapes are shown in Fig.~\ref{fig3}.
As can be seen that numerical solutions computed by
the FRS \eqref{eq2.5} and the ALRS \eqref{eq3.5} are not much difference.
We can see from this figure that for the star shape, the tips of it move inward
and the gaps move outward. Finally, the star changes to the shape of a circle.
For the dumbbell shape, the spheres at both ends are increasing
and the width of handle part becomes large.
For the torus shape, the inner circle shrinks faster than the outer circle.
Then, the inner circle disappears and resulting in a circle shape.
Fig.~\ref{fig4} displays the ranks of the numerical solutions computed by the two proposed methods
with three different initial shapes.
It can be seen in this figure that the rank of the low-rank numerical solutions
is lower than the full-rank numerical solutions.
The rank of the full-rank numerical solutions varies strongly at the first several time steps.
According to \cite{Ceruti2022adaptiveDLR}, rank adaptivity is more suitable for this problem.
\begin{figure}[h]
	\centering
	\subfigure[star, $t = 0.01$]
	{\includegraphics[width=2.1in,height=2.4in]{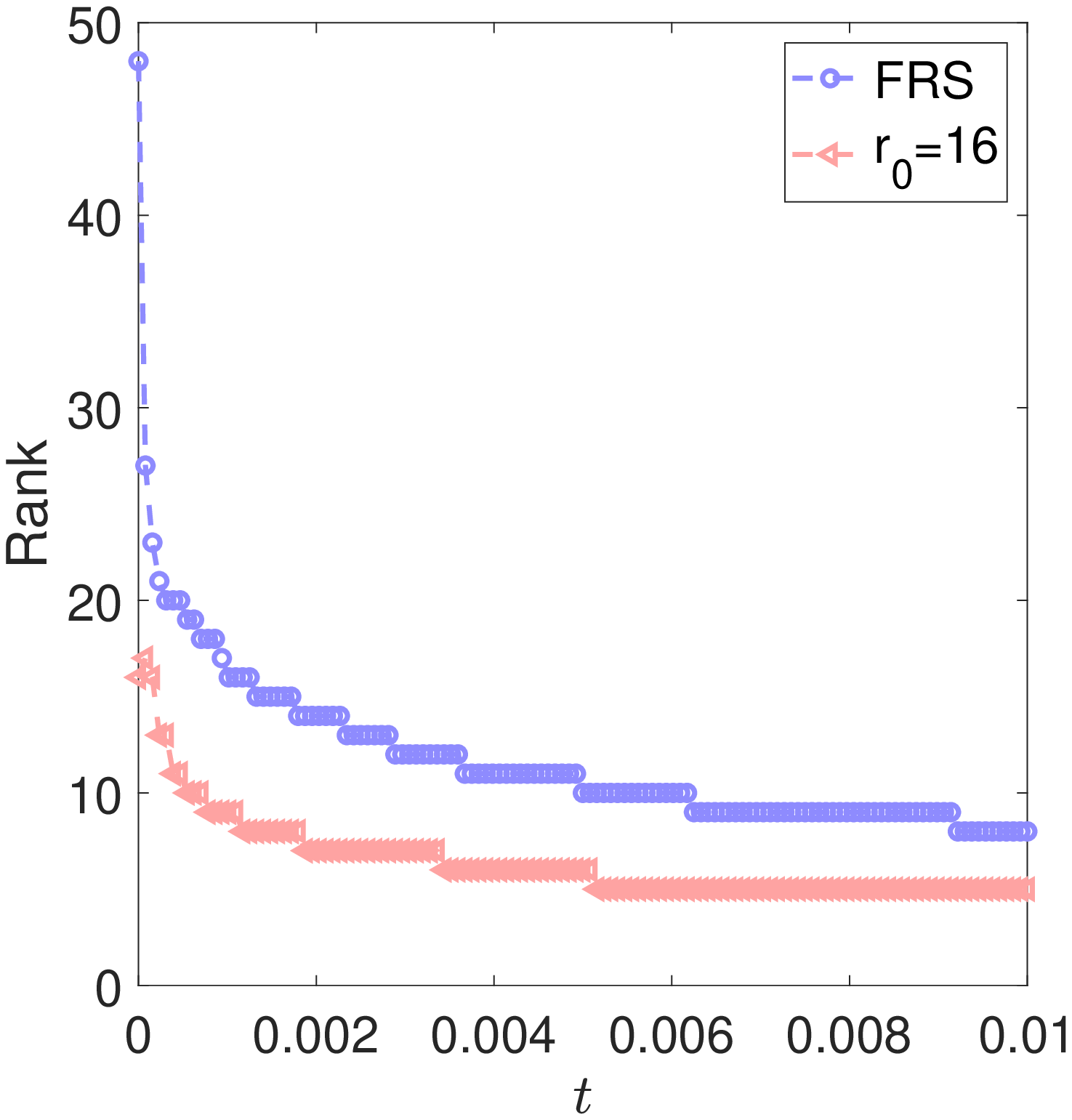}}
	\subfigure[dumbbell, $t = 0.02$]
	{\includegraphics[width=2.1in,height=2.4in]{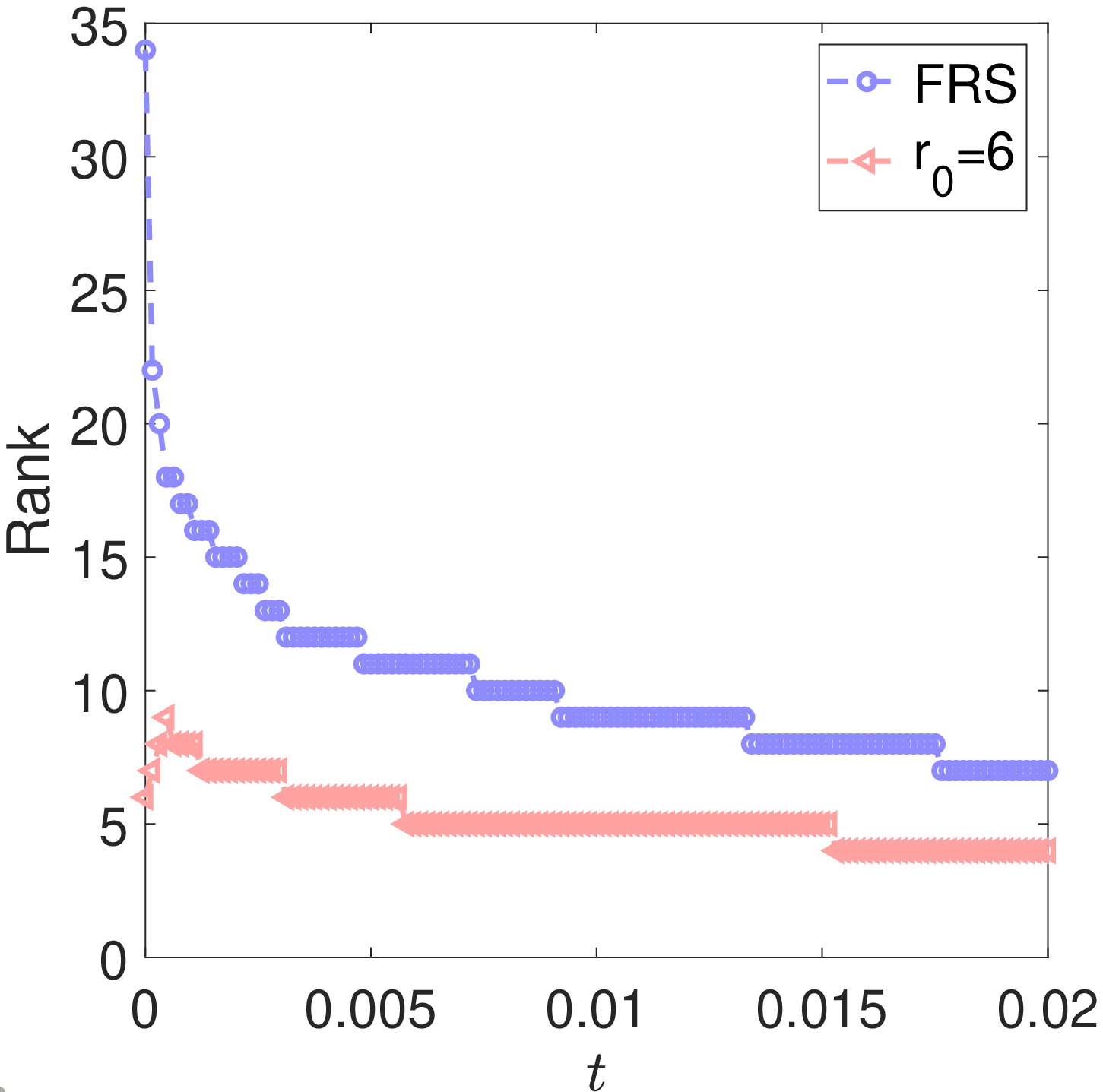}}
	\subfigure[torus, $t = 0.04$]
	{\includegraphics[width=2.1in,height=2.4in]{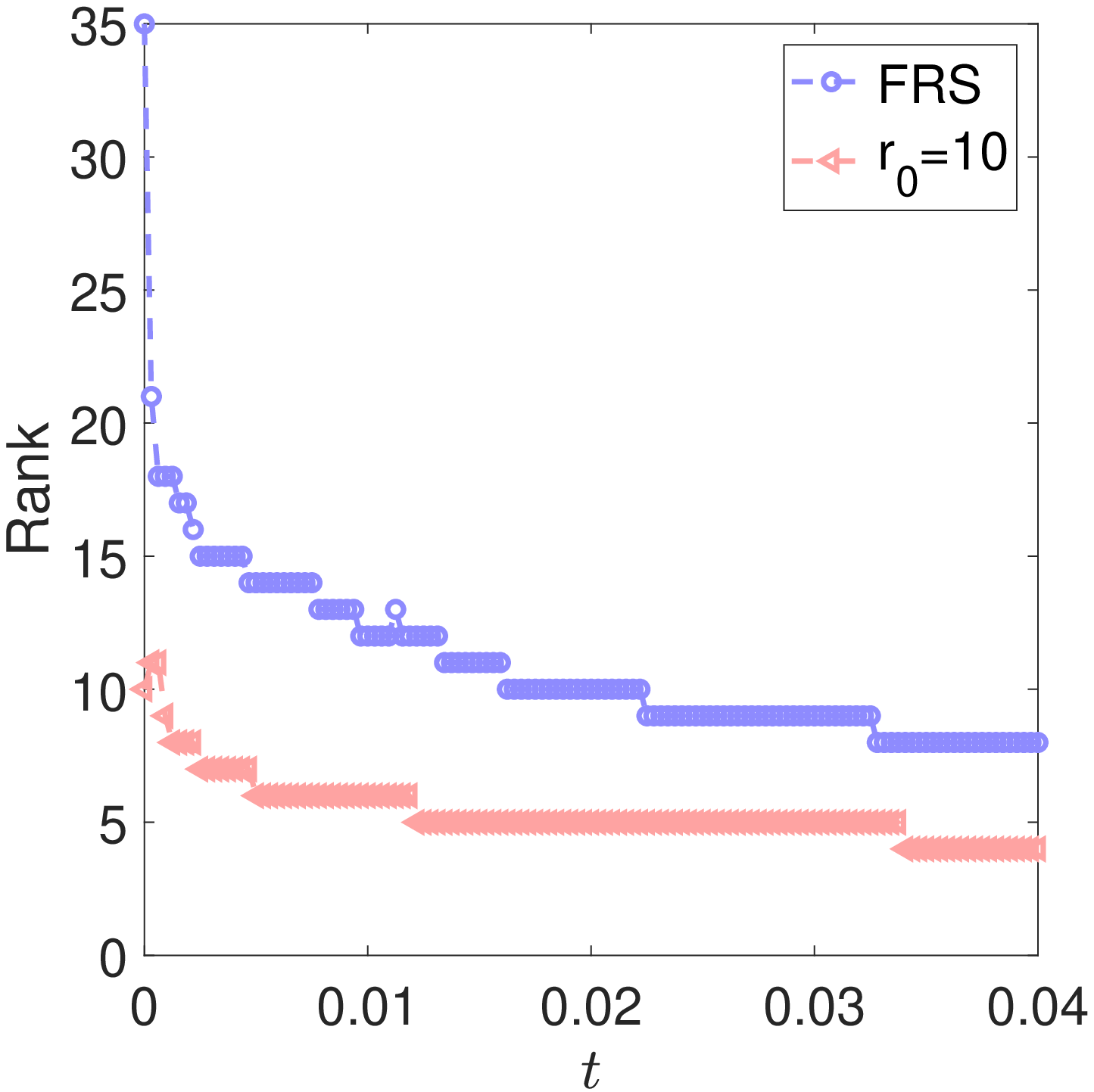}}
	\caption{The ranks of the numerical solutions
		computed by \eqref{eq2.5} and \eqref{eq3.5} for Example 2 with different initial shapes
		and $M = N = 128$.}
	\label{fig4}
\end{figure}

\section{Concluding remarks}
\label{sec5}

For solving the EFK equation \eqref{eq1.1} numerically,
we propose a FRS scheme \eqref{eq2.5} and a rank-adaptive splitting approach \eqref{eq3.5}.
Based on the semi-discrete system \eqref{eq2.1}, we first split it into three parts.
This yields our FRS scheme \eqref{eq2.5}.
Then, the convergence and the discrete maximum principle of the scheme are proved in Section \ref{sec2}.
According to the frame of the FRS scheme,
the ALRS approach \eqref{eq3.5} is designed for finding a low-rank solution of \eqref{eq1.1}.
Example 1 strongly supports the theoretical results giving in Section \ref{sec2}.
On the other hand, it also shows that our methods can preserve the energy decaying.
Moreover, the results in Example 1 indicate that for a given suitable initial rank $r_0$,
the convergence orders of the ALRS approach \eqref{eq3.5} are first-order accurate in time 
and second-order accurate in space, respectively.
In Example 2, we simulate mean curvature effect motion
for three initial shapes (i.e., star, dumbbell and torus).
Based on this work, two future research directions are suggested:
\begin{itemize}
	\item[(i)] {Study the convergence and the energy decaying of the ALRS approach \eqref{eq3.5};}
	\item[(ii)] {Design a low-rank algorithm for Eq.~\eqref{eq1.1} following the idea of  \cite{Einkemmer2019Vlasov}. Some high-order low-rank algorithms are worth being considered for Eq.~\eqref{eq1.1}.}
\end{itemize}

\section*{Acknowledgments}
\addcontentsline{toc}{section}{Acknowledgments}
\label{sec6}

\textit{This research is supported by the National Natural Science Foundation of China
(No.~11801463), the Natural Science Foundation of Sichuan Province (No.~2022NSFSC1815)
and the Applied Basic Research Program of Sichuan Province (No.~2020YJ0007).
X.-M. Gu is also supported by Guanghua Talent Project of Southwestern University of Finance and Economics.}

\section*{Data availability}
The datasets generated and analyzed during the current study are not publicly available
but are available from the authors on reasonable request.

\section*{Conflict of interests}
The authors declare that they have no known competing financial interests or
personal relationships that could have appeared to influence the work reported in this paper.

\section*{References}
\addcontentsline{toc}{section}{References}
\bibliography{references}

\begin{thebibliography}{10}
\expandafter\ifx\csname url\endcsname\relax
  \def\url#1{\texttt{#1}}\fi
\expandafter\ifx\csname urlprefix\endcsname\relax\def\urlprefix{URL }\fi
\expandafter\ifx\csname href\endcsname\relax
  \def\href#1#2{#2} \def\path#1{#1}\fi

\bibitem{coullet1987nature}
P.~Coullet, C.~Elphick, D.~Repaux, Nature of spatial chaos, Phys. Rev. Lett. 58
  (1987) 431.
\newblock \href {http://dx.doi.org/10.1103/PhysRevLett.58.431}
  {\path{doi:10.1103/PhysRevLett.58.431}}.

\bibitem{van1987dynamical}
W.~Van~Saarloos, Dynamical velocity selection: marginal stability, Phys. Rev.
  Lett. 58 (1987) 2571.
\newblock \href {http://dx.doi.org/10.1103/PhysRevLett.58.2571}
  {\path{doi:10.1103/PhysRevLett.58.2571}}.

\bibitem{van1988front}
W.~Van~Saarloos, Front propagation into unstable states: marginal stability as
  a dynamical mechanism for velocity selection, Phys. Rev. A 37 (1988) 211.
\newblock \href {http://dx.doi.org/10.1103/PhysRevA.37.211}
  {\path{doi:10.1103/PhysRevA.37.211}}.

\bibitem{dee1988bistable}
G.~Dee, W.~van Saarloos, Bistable systems with propagating fronts leading to
  pattern formation, Phys. Rev. Lett. 60 (1988) 2641.
\newblock \href {http://dx.doi.org/10.1103/PhysRevLett.60.2641}
  {\path{doi:10.1103/PhysRevLett.60.2641}}.

\bibitem{guozhen1982experiments}
G.~Zhu, Experiments on director waves in nematic liquid crystals, Phys. Rev.
  Lett. 49 (1982) 1332.
\newblock \href {http://dx.doi.org/10.1103/PhysRevLett.49.1332}
  {\path{doi:10.1103/PhysRevLett.49.1332}}.

\bibitem{aronson1978multidimensional}
D.~G. Aronson, H.~F. Weinberger, Multidimensional nonlinear diffusion arising
  in population genetics, Adv. Math. 30 (1978) 33--76.

\bibitem{ahlers1983vortex}
G.~Ahlers, D.~S. Cannell, Vortex-front propagation in rotating
  {C}ouette-{T}aylor flow, Phys. Rev. Lett. 50 (1983) 1583.
\newblock \href {http://dx.doi.org/10.1103/PhysRevLett.50.1583}
  {\path{doi:10.1103/PhysRevLett.50.1583}}.

\bibitem{hornreich1975critical}
R.~Hornreich, M.~Luban, S.~Shtrikman, Critical behavior at the onset of
  $\vec{k}$-space instability on the $\lambda$ line, Phys. Rev. Lett. 35 (1975)
  1678.
\newblock \href {http://dx.doi.org/10.1103/PhysRevLett.35.1678}
  {\path{doi:10.1103/PhysRevLett.35.1678}}.

\bibitem{Kolmogorov37}
A.~N. Kolmogorov, I.~G. Petrovskii, N.~S. Piskunov, A study of the diffusion
  equation with increase in the amount of substance, and its application to a
  biological problem, Bull. Moscow Univ. Math. Mech. 1~(6) (1937) 1--26.

\bibitem{belmonte2014effective}
J.~Belmonte-Beitia, G.~F. Calvo, V.~M. Perez-Garcia, Effective particle methods
  for {F}isher--{K}olmogorov equations: theory and applications to brain tumor
  dynamics, Commun. Nonlinear Sci. Numer. Simulat. 19 (2014) 3267--3283.

\bibitem{adomian1995fisher}
G.~Adomian, {F}isher--{K}olmogorov equation, Appl. Math. Lett. 8 (1995) 51--52.

\bibitem{sun2022convex}
Q.~Sun, B.~Ji, L.~Zhang, A convex splitting {BDF2} method with variable
  time-steps for the extended {F}isher--{K}olmogorov equation, Comput. Math.
  Appl. 114 (2022) 73--82.

\bibitem{peletier1997spatial}
L.~A. Peletier, W.~C. Troy, Spatial patterns described by the extended
  {F}isher--{K}olmogorov equation: Periodic solutions, SIAM J. Math. Anal. 28
  (1997) 1317--1353.

\bibitem{danumjaya2005orthogonal}
P.~Danumjaya, A.~K. Pani, Orthogonal cubic spline collocation method for the
  extended {F}isher--{K}olmogorov equation, J. Comput. Appl. Math. 174 (2005)
  101--117.

\bibitem{danumjaya2006numerical}
P.~Danumjaya, A.~K. Pani, Numerical methods for the extended
  {F}isher--{K}olmogorov ({EFK}) equation, Int. J. Numer. Anal. Model. 3 (2006)
  186--210.

\bibitem{kadri2011second}
T.~Kadri, K.~Omrani, A second-order accurate difference scheme for an extended
  {F}isher--{K}olmogorov equation, Comput. Math. Appl. 61 (2011) 451--459.

\bibitem{khiari2011finite}
N.~Khiari, K.~Omrani, Finite difference discretization of the extended
  {F}isher--{K}olmogorov equation in two dimensions, Comput. Math. Appl. 62
  (2011) 4151--4160.

\bibitem{gudi2013fully}
T.~Gudi, H.~S. Gupta, A fully discrete {$C^0$} interior penalty {G}alerkin
  approximation of the extended {F}isher--{K}olmogorov equation, J. Comput.
  Appl. Math. 247 (2013) 1--16.

\bibitem{he2016norm}
D.~He, On the {$L^{\infty}$}-norm convergence of a three-level linearly
  implicit finite difference method for the extended {F}isher--{K}olmogorov
  equation in both {1D} and {2D}, Comput. Math. Appl. 71 (2016) 2594--2607.

\bibitem{dehghan2021local}
M.~Dehghan, N.~Shafieeabyaneh, Local radial basis function--finite-difference
  method to simulate some models in the nonlinear wave phenomena: regularized
  long-wave and extended {F}isher--{K}olmogorov equations, Eng. Comput. 37
  (2021) 1159--1179.

\bibitem{abbaszadeh2020error}
M.~Abbaszadeh, M.~Dehghan, A.~Khodadadian, C.~Heitzinger, Error analysis of
  interpolating element free {G}alerkin method to solve non-linear extended
  {F}isher--{K}olmogorov equation, Comput. Math. Appl. 80 (2020) 247--262.

\bibitem{ismail2022three}
K.~Ismail, N.~Atouani, K.~Omrani, A three-level linearized high-order accuracy
  difference scheme for the extended {F}isher--{K}olmogorov equation, Eng.
  Comput. 38 (2022) 1215--1225.

\bibitem{li2022new}
S.~Li, D.~Xu, J.~Zhang, C.~Sun, A new three-level fourth-order compact finite
  difference scheme for the extended {F}isher--{K}olmogorov equation, Appl.
  Numer. Math. 178 (2022) 41--51.

\bibitem{koch2007dynamical}
O.~Koch, C.~Lubich, Dynamical low-rank approximation, SIAM J. Matrix Anal.
  Appl. 29 (2007) 434--454.

\bibitem{nonnenmacher2008dynamical}
A.~Nonnenmacher, C.~Lubich, Dynamical low-rank approximation: applications and
  numerical experiments, Math. Comput. Simulat. 79 (2008) 1346--1357.

\bibitem{ostermann2019convergence}
A.~Ostermann, C.~Piazzola, H.~Walach, Convergence of a low-rank
  {L}ie--{T}rotter splitting for stiff matrix differential equations, SIAM J.
  Numer. Anal. 57 (2019) 1947--1966.

\bibitem{Einkemmer2019Vlasov}
L.~Einkemmer, C.~Lubich, A quasi-conservative dynamical low-rank algorithm for
  the {V}lasov equation, SIAM J. Sci. Comput. 41 (2019) B1061--B1081.

\bibitem{zhao2021low}
Y.-L. Zhao, A.~Ostermann, X.-M. Gu, A low-rank {L}ie--{T}rotter splitting
  approach for nonlinear fractional complex {G}inzburg-{L}andau equations, J.
  Comput. Phys. 446 (2021) 110652.
\newblock \href {http://dx.doi.org/10.1016/j.jcp.2021.110652}
  {\path{doi:10.1016/j.jcp.2021.110652}}.

\bibitem{grasedyck2013literature}
L.~Grasedyck, D.~Kressner, C.~Tobler, A literature survey of low-rank tensor
  approximation techniques, GAMM-Mitt. 36 (2013) 53--78.

\bibitem{einkemmer2022robust}
L.~Einkemmer, A.~Ostermann, C.~Scalone, A robust and conservative dynamical
  low-rank algorithm, arXiv preprint,~arXiv:2206.09374 (2022) 21
  pages.~\url{https://arxiv.org/abs/2206.09374}.

\bibitem{lubich2014projector}
C.~Lubich, I.~V. Oseledets, A projector-splitting integrator for dynamical
  low-rank approximation, BIT 54 (2014) 171--188.

\bibitem{kieri2019projection}
E.~Kieri, B.~Vandereycken, Projection methods for dynamical low-rank
  approximation of high-dimensional problems, Comput. Methods Appl. Math. 19
  (2019) 73--92.

\bibitem{Ceruti2022unconventional}
G.~Ceruti, C.~Lubich, An unconventional robust integrator for dynamical
  low-rank approximation, BIT 62 (2022) 23--44.

\bibitem{Ceruti2022adaptiveDLR}
G.~Ceruti, J.~Kusch, C.~Lubich, A rank-adaptive robust integrator for dynamical
  low-rank approximation, in press,~BIT (2022) 26 pages.~\href
  {http://dx.doi.org/10.1007/s10543-021-00907-7}
  {\path{doi:10.1007/s10543-021-00907-7}}.

\bibitem{Carrel2022DLRparareal}
B.~Carrel, M.~J. Gander, B.~Vandereycken, Low-rank parareal:~a low-rank
  parallel-in-time integrator, arXiv preprint,~arXiv:2203.08455 (2022) 20
  pages.~\url{https://arxiv.org/abs/2203.08455}.

\bibitem{van1992FFT}
C.~Van~Loan, Computational Frameworks for the Fast Fourier Transform, SIAM,
  Philadelphia, PA, 1992.

\bibitem{einkemmer2015overcoming}
L.~Einkemmer, A.~Ostermann, Overcoming order reduction in diffusion-reaction
  splitting. {P}art 1: {D}irichlet boundary conditions, SIAM J. Sci. Comput. 37
  (2015) A1577--A1592.

\bibitem{quarteroni2010numerical}
A.~Quarteroni, R.~Sacco, F.~Saleri, Numerical Mathematics, Springer-Verlag, New
  York, 2010.

\bibitem{chen2022second}
H.~Chen, H.-W. Sun, Second-order maximum principle preserving {S}trang’s
  splitting schemes for anisotropic fractional {A}llen--{C}ahn equations,
  Numer. Algorithms 90 (2022) 749--771.

\bibitem{helmke1996optimization}
U.~Helmke, J.~B. Moore, Optimization and Dynamical Systems, Springer-Verlag,
  London, 1994.

\bibitem{kim2022learning}
Y.~Kim, Y.~Choi, Learning finite difference methods for reaction-diffusion type
  equations with {FCNN}, Comput. Math. Appl. 123 (2022) 115--122.

\end{thebibliography}

\end{document}